\documentclass[12pt]{amsart}
\usepackage{mathrsfs}
\usepackage{txfonts}
\usepackage{amssymb}
\usepackage{amsmath}
\allowdisplaybreaks
\usepackage{color}
\makeatletter \@mparswitchfalse \makeatother
\newtheorem{theorem}{Theorem}[section]
\newtheorem{lemma}{Lemma}[section]

\newtheorem{proposition}{Proposition}[section]
\newtheorem{remark}{Remark}[section]
\newtheorem{example}{Example}[section]

\def\eqref#1{(\ref{eq#1})}

\numberwithin{equation}{section}
\begin{document}
\title{Order automorphisms of partial isometries in $M_n(\mathbb C)$}
\author[F. Jia, G. Ji]{Fengyang Jia, Guoxing Ji$^*$}
  \address{School  of Mathematics and Statistics,
  Shaanxi Normal University,
  Xian , 710119, People's  Republic of  China}
  \email{jfy123@snnu.edu.cn, gxji@snnu.edu.cn}

     \thanks{This research was
supported by the National Natural
   Science Foundation of China(No. 12271323).
\\ \indent $^*$Corresponding author}
    \subjclass{Primary  47B49; Secondary   15A86}
\keywords{matrix, partial isometry, partial order, orthogonality, automorphism} \maketitle
\begin{abstract}
  We  investigate and characterize order automorphisms on the set of partial isometries in the finite-dimensional matrix algebra $M_n(\mathbb{C})$. Different from the classical order automorphisms of subspace lattices, which can be implemented by standard invertible or unitary transformations, the order automorphisms considered herein admit no such conventional matrix representations. Instead, they are essentially governed by matrices such that $I-(A+A^*)$ is either positive or negative invertible. The results reveal that the structural features of order automorphisms for partial isometries are substantially more intricate than those of classical subspace automorphisms.
\end{abstract}

\baselineskip18pt

\section{Introduction}
A fundamental theorem of projective geometry \cite[p.~44]{bae} states that, for the complex Euclidean space $\mathbb{C}^n$ with $n\geq 3$, every lattice automorphism $\varphi$ on the set of all subspaces of $\mathbb{C}^n$ takes the form $\varphi(M)=SM$ for all subspaces $M\subseteq \mathbb{C}^n$. Here, $S$ is a $\tau$-quasilinear bijective transformation on $\mathbb{C}^n$, meaning that $\tau$ is a ring automorphism of $\mathbb{C}$ and $S$ is an additive bijection satisfying $S(ax)=\tau(a)S(x)$ for all $a\in\mathbb{C}$ and $x\in\mathbb{C}^n$.  P. A. Fillmore and W. E. Longstaff established an infinite-dimensional counterpart of this classical theorem for closed subspaces in \cite{fil}.

Identifying closed subspaces with orthogonal projections, this result essentially characterizes order automorphisms of the projection lattice. It is well known that every projection is a positive partial isometry, and the partial order defined on projections can be naturally extended to the set of all partial isometries \cite{hal}. In fact, this order is exactly the restriction of the operator star partial order to partial isometries(cf.\cite{do,dr}). A natural problem is to generalize such order isomorphism theorems from projections to the whole set of partial isometries. Combining the partial order with orthogonality, L. Moln$\acute{\text{a}}$r in \cite{mol} characterized order-orthogonal automorphisms under a mild continuity assumption, while the authors in \cite{sh} removed the continuity condition and obtained the same characterization of automorphisms without additional restrictions.

In fact, the study of general order automorphisms is of great research significance. This paper focuses on characterizing order automorphisms of partial isometries in the matrix algebra $M_n(\mathbb{C})$. We show that, in contrast to the classical subspace order automorphism characterization, such automorphisms cannot be realized by unitary or invertible matrices in the usual sense. Instead, they are essentially determined by matrices for which $I-(A+A^*)$ is either positive invertible or negative invertible. Accordingly, the structure of these order automorphisms is far more complicated than the classical cases. We now recall some basic definitions and preliminary results.

Throughout this paper, let $\mathbb{C}^n$ denote the $n$-dimensional complex Euclidean space with $n\geq 3$ and let $M_n(\mathbb{C})$ stand for the algebra of all $n\times n$ complex matrices. For any $x,y\in\mathbb C^n$, $\langle x,y\rangle$ stands for the inner product of $\mathbb C^n$ and $y\otimes x $ is the rank one operator defined by $y\otimes x(z)=\langle z,y\rangle x,$ for any $z\in\mathbb C^n$. For a subset $S\subseteq \mathbb C^n$, $[S]$ denotes the linear space spanned by $S$ and $S^{\perp}$ is the orthogonal complement of $S$.  For any $A\in M_n(\mathbb C)$, we denote by $A^*$  and $\sigma(A)$ the adjoint and eigenvalue of $A$ respectively.  A projection $P\in M_n(\mathbb C)$ is a matrix satisfying $P=P^*=P^2$ and $\mathcal P(M_n(\mathbb C))$  denotes the set of all projections in $M_n(\mathbb C)$. A matrix $U\in M_n(\mathbb C)$ is called a partial isometry
if $U^*U\in \mathcal P(M_n(\mathbb C))$, which is called the initial space of $U$. In this case,  $UU^*$ is also a projection and is called the final projection.  The set of all partial isometries in $M_n(\mathbb C)$ is denoted by $\operatorname{PI}(M_n(\mathbb C)$.

In particular, all aforementioned notions coincide with their counterparts in general Hilbert spaces, and throughout this paper we restrict our attention to the finite-dimensional case.
 \section{Preliminaries}
 Let $A,B\in M_n(\mathbb{C})$. Recall that we define the star partial order $A\overset{*}{\leq} B$ by the conditions
\(
A^*A = A^*B \quad \text{and} \quad AA^* = AB^*
\)
(see \cite{do,dr}). Suppose now that $A,B\in  \operatorname{PI} (M_n(\mathbb{C}))$.
Then $A\overset{*}{\leq} B$ holds if and only if
\[
A^*A \leq B^*B,\quad AA^* \leq BB^* \quad \text{and} \quad BA^*A = A,
\]
meaning that $B$ coincides with $A$ on the initial space of $A$ (see \cite{hal,mol}).
For $A,B\in  \operatorname{PI} (M_n(\mathbb{C}))$, we write $A\leq B$ as a shorthand for this star order.
It is straightforward to verify that $ \mathcal P (M_n(\mathbb{C})) \subseteq  \operatorname{PI} (M_n(\mathbb{C}))$,
and under this inclusion, the standard partial order on projections coincides exactly with the restricted partial order inherited from $ \operatorname{PI} (M_n(\mathbb{C}))$. For any rank one partial isometry $y\otimes x$, we assume that $\|x\|=\|y\|=1$. $y\otimes x \leq U$ for some partial isometry $U$ if and only if $Ux=y$. The lemma below is straightforward. We provide its proof for the sake of completeness.
\begin{lemma} \label{lemma21}
Suppose $y_k\otimes x_k$ ($k=1,2$) are two rank-one partial isometries.
There exists a partial isometry $R$ satisfying $y_k\otimes x_k\le R$ for $k=1,2$ if and only if
\(
\langle y_1,y_2\rangle = \langle x_1,x_2\rangle.
\)
\end{lemma}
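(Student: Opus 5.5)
Both directions rest on the characterization recalled just before the statement: for unit vectors, $y\otimes x\le U$ if and only if $Ux=y$. The necessity part is then a short isometry argument, and the sufficiency part is an explicit construction of $R$.

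\emph{Necessity.} Suppose $R$ is a partial isometry with $y_k\otimes x_k\le R$ for $k=1,2$. Then $Rx_k=y_k$ for $k=1,2$. Since $\|Rx_k\|=\|y_k\|=1=\|x_k\|$ and $R^*R$ is a projection, we get $\langle R^*Rx_k,x_k\rangle=\|x_k\|^2$, so $x_k$ lies in the initial space of $R$. On its initial space $R$ is isometric, so it preserves inner products there. Hence
\[
\langle y_1,y_2\rangle=\langle Rx_1,Rx_2\rangle=\langle R^*Rx_1,x_2\rangle=\langle x_1,x_2\rangle .
\]

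\emph{Sufficiency.} Assume $\langle y_1,y_2\rangle=\langle x_1,x_2\rangle$. The plan is to define a linear map $V$ on $M=[x_1,x_2]$ with $Vx_k=y_k$, show it is an isometry onto $N=[y_1,y_2]$, and extend by $0$ on $M^\perp$. This gives a partial isometry $R$ with initial space $M$ and $Rx_k=y_k$, so $y_k\otimes x_k\le R$.

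The only point needing care is that $V$ is well defined when $x_1,x_2$ are linearly dependent; I would handle it via a Gram matrix identity. The Gram matrices of $(x_1,x_2)$ and $(y_1,y_2)$ coincide, because the diagonal entries are all $1$ and the off-diagonal entries agree by hypothesis. Therefore, for any scalars $a,b$,
\[
\|a x_1+b x_2\|^2=\|a y_1+b y_2\|^2 .
\]
It follows that $a x_1+bx_2=0$ implies $a y_1+b y_2=0$. So the assignment $ax_1+bx_2\mapsto ay_1+by_2$ is a well-defined linear map $M\to N$, it is isometric, and it is onto $N$.

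Finally, set $R=VP_M$, where $P_M$ is the orthogonal projection onto $M$. Then $R^*R=P_M$ is a projection, so $R$ is a partial isometry, and $Rx_k=y_k$ as required.
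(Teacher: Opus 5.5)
Your proof is correct, but it takes a different route from the paper's.

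\textbf{The paper's construction.} The paper proves only sufficiency, and only after assuming without comment that $x_1,x_2$ are linearly independent. It writes $x_1=\langle x_1,x_2\rangle x_2+\xi x_3$ and $y_1=\langle y_1,y_2\rangle y_2+\eta y_3$, with unit vectors $x_3\perp x_2$ and $y_3\perp y_2$, and notes $|\xi|=|\eta|$. It then defines $R$ explicitly on the orthonormal pair $\{x_2,x_3\}$ by $Rx_2=y_2$, $Rx_3=\eta\xi^{-1}y_3$, and $R=0$ on $[x_2,x_3]^{\perp}$.

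\textbf{Your construction.} You use the equality of the $2\times 2$ Gram matrices to show that $ax_1+bx_2\mapsto ay_1+by_2$ is a well-defined isometry of $[x_1,x_2]$ onto $[y_1,y_2]$, and then set $R=VP_M$.

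\textbf{What each route buys.} Your argument has several advantages:
\begin{itemize}
\item It is basis-free.
\item It handles the dependent case $x_1=cx_2$, which forces $y_1=cy_2$, automatically rather than setting it aside.
\item It also supplies the necessity direction, which the paper omits.
\item It extends verbatim to any finite family $\{y_k\otimes x_k\}$: a common upper bound exists if and only if $\langle y_j,y_k\rangle=\langle x_j,x_k\rangle$ for all $j,k$, i.e.\ if and only if every pair has one.
\end{itemize}
The paper's construction, by contrast, writes $R$ down in explicit coordinates. The unimodular factor $\eta\xi^{-1}$ shows exactly where the modulus condition $|\langle x_1,x_2\rangle|=|\langle y_1,y_2\rangle|$ is used, and the identity $Rx_1=y_1$ shows where the full equality of inner products enters.
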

\begin{proof}  We only prove the sufficiency. We may assume that $x_1$ and $x_2$  are linearly independent. Put $x_1 = \langle x_1,x_2\rangle x_2 + \xi x_3$ and $y_1 = \langle y_1,y_2\rangle y_2 + \eta y_3$
for unit vectors $x_3$ and $y_3$ satisfying $x_3\perp x_2$ and $y_3\perp y_2$. Then $|\xi|^2=1-|\langle x_1,x_2\rangle|^2=1-|\langle y_1,y_2\rangle|^2=|\eta|^2$. We now define a partial isometry $R$ by
 $Rx_2=y_2 $,  $Rx_3=\eta \xi^{-1} y_3$ and $Rx=0$ for all $x\in [x_2,x_3]^{\perp}$. Then
  \[Rx_1=R(\langle x_1,x_2\rangle x_2+\xi x_3)=\langle y_1,y_2\rangle y_2+\xi ( \eta \xi^{-1} y_3)=y_1.\]
  Hence $y_k\otimes x_k=Rx_k\otimes x_k\leq R$ for $k=1,2$.
\end{proof}
In general, let $U, V, W\in \operatorname{PI}(M_n(\mathbb{C}))$.
If $U \leq W$ and $V \leq W$ (resp. $W \leq U$ and $W \leq V$), we call $W$ an upper bound (resp. a lower bound) for $U$ and $V$. The supremum and infimum may be defined analogously.
When $U^*V = UV^* = 0$, we say that the partial isometry $U$ is orthogonal to the partial isometry $V$ and denoted by $U\perp V$.
Just as in the setting of the star partial order, the following proposition has a straightforward proof, so we state it without proof.
\begin{proposition}\label{prop21} Let $U, V\in \operatorname{PI}(M_n(\mathbb{C}))$.

$(i)$  If $U$ and $V$ admit an upper bound (resp. a lower bound), then the supremum $\sup\{U,V\}$ (resp. the infimum $\inf\{U,V\}$) exists.

$(ii)$ $U\leq U+V$ if and only if $U\perp V$.

$(iii)$ $U\leq V$ if and only if $U=VU^*U$. In particular, $\{U:U\leq V\}=\{VE: E\leq V^*V\}$.
\end{proposition}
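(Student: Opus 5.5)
I will prove the three parts in the order (iii), (ii), (i), because (iii) gives a convenient description of the order that the other two parts then use.

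\emph{Part (iii).} Suppose $U\le V$. By the characterization recalled above, $VU^*U=U$, so this direction is immediate. Conversely, assume $U=VU^*U$ and put $E=U^*U$, a projection. For $x\in E\mathbb C^n$ we have $\|x\|=\|Ux\|=\|VEx\|=\|Vx\|$. Since $V$ is a partial isometry, $\|Vx\|=\|x\|$ forces $x\in V^*V\mathbb C^n$, hence $E\le V^*V$. It follows that $UU^*=VEV^*$, and this is a projection dominated by $VV^*$, because $V$ maps the subspace $E\mathbb C^n\subseteq V^*V\mathbb C^n$ isometrically. So all three conditions hold and $U\le V$. For the set equality, the map $U\mapsto U^*U$ sends $\{U:U\le V\}$ into $\{E:E\le V^*V\}$. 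In the other direction, for any projection $E\le V^*V$ the matrix $VE$ is a partial isometry with $(VE)^*VE=EV^*VE=E$, and $V(VE)^*(VE)=VE$.

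\emph{Part (ii).} Assume $U\perp V$, so $U^*V=0$ and $UV^*=0$. Then
\[
(U+V)^*(U+V)=U^*U+V^*V,
\]
and this is a projection because $U^*U\,V^*V=U^*(UV^*)V=0$. Hence $U+V$ is a partial isometry, and $(U+V)U^*U=U+VU^*U=U$ since $VU^*=(UV^*)^*=0$. Also $U^*U\le (U+V)^*(U+V)$ and, symmetrically, $UU^*\le (U+V)(U+V)^*$, so $U\le U+V$. Conversely, suppose $U\le U+V$, and note that $U+V$ is then implicitly a partial isometry. The star order gives $U^*U=U^*(U+V)$, hence $U^*V=0$, and $UU^*=U(U+V)^*$, hence $UV^*=0$.

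\emph{Part (i).} By (iii), $\{W:W\le R\}$ is order-isomorphic to the projection lattice of $R^*R\,\mathbb C^n$ via $E\mapsto RE$.
\begin{itemize}
\item Infimum. Let $W_0$ be a lower bound of $U$ and $V$. Every lower bound $W$ satisfies $W=UW^*W=VW^*W$, so $W^*W\le F$, where $F$ is the projection onto $\{x\in U^*U\mathbb C^n\cap V^*V\mathbb C^n:Ux=Vx\}$. Then $UF=VF$ is a lower bound of both, and by (iii) it dominates every lower bound. So $\inf\{U,V\}=UF$. Note that this construction does not even need $W_0$.
\item Supremum. Fix an upper bound $R$. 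Inside $\{W:W\le R\}$, which is a complete lattice, $U=RE_1$ and $V=RE_2$, so $RE$ with $E=E_1\vee E_2$ is the least upper bound among the elements below $R$.
\end{itemize}

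The main obstacle is to show that $RE$ does not depend on the choice of $R$, i.e.\ that it lies below every other upper bound $R'$. Here is how I would do it. The range $E\mathbb C^n$ is spanned by $E_1\mathbb C^n+E_2\mathbb C^n$. On $E_1\mathbb C^n$ both $R$ and $R'$ agree with $U$, and on $E_2\mathbb C^n$ both agree with $V$. By linearity $R'$ agrees with $R$ on $E\mathbb C^n$, so $R'E=RE$, and $RE\le R'$ by (iii). Hence $RE=\sup\{U,V\}$.
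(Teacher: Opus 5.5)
Your proof is correct and complete. The paper gives no proof to compare it with: it states this proposition without proof, calling it straightforward ``just as in the setting of the star partial order.'' Your argument supplies exactly the verification the authors leave to the reader. It makes (iii) the workhorse and identifies $\{W:W\le R\}$ with the subprojections of $R^*R$ via $E\mapsto RE$.

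Two small remarks.

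\textbf{The order isomorphism $E\mapsto RE$.} You assert this step but do not prove it. It follows from (iii) because $R$ is isometric on $R^*R\,\mathbb C^n$. For projections $E_1,E_2\le R^*R$,
\[
RE_1\le RE_2 \iff RE_1=RE_2E_1 \iff E_1=E_2E_1 .
\]
For the supremum you do not even need this. Since $E_1,E_2\le E$, (iii) gives $U\le RE$ and $V\le RE$ directly from $RE\,E_1=RE_1=U$ and $RE\,E_2=RE_2=V$.

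\textbf{The infimum always exists.} You are right that the infimum construction needs no lower bound $W_0$, because $0\le U$ for every partial isometry $U$. The later sentence in the proof of Lemma 3.1 that ``$\inf\{aI,bI\}$ does not exist'' should therefore be read as saying that $aI$ and $bI$ have no nonzero common lower bound. Your formula confirms this: for $U=aI$, $V=bI$ with $a\ne b$, the subspace $\{x: ax=bx\}$ is trivial, so $F=0$ and $\inf\{aI,bI\}=UF=0$.
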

 As stated by the fundamental theorem of projective geometry \cite[p.~44]{bae}, every order automorphism of the projection lattice
  $\mathcal P(M_n(\mathbb{C}))$ is induced by a $\tau$-quasilinear invertible transformation $S$ on $\mathbb{C}^n$.
Order automorphisms of $\operatorname{PI}(M_n(\mathbb{C}))$ are closely linked to $\tau$-quasilinear invertible transformations on $\mathbb{C}^n$.
We begin with the following proposition.
  Let    $\tau$ be a ring automorphism of  $\mathbb C$. A transformation $A$ on $\mathbb C^n$ is said to be $\tau$ quasi-linear  if $A(ax+y)=\tau(a)Ax+  Ay$ for all $a\in\mathbb C$ and $x,y\in\mathbb C^n$. If $\tau(a)=\bar{a}$ for all $a\in\mathbb C$, then $A$ is said to be conjugate linear. In particular, when $A^*A=AA^*=I$, we say that $A$ is a  conjugate unitary transformation.
\begin{proposition}  \label{prop22}
Let $\tau$ and $\delta$ be ring automorphisms of $\mathbb{C}$, and let $A$ and $B$ be invertible $\tau$-quasilinear and $\delta$-quasilinear transformations on $\mathbb{C}^n$, respectively.

\noindent $(i)$ If $[Ax] = [Bx]$ for all $x\in\mathbb{C}^n$, then $\delta = \tau$ and $B = bA$ for some nonzero constant $b\in\mathbb{C}$.

\noindent $(ii)$ If the equivalence
    \[
    \langle Ax,y\rangle = 0 \iff x\perp y
    \]
    holds for all $x,y\in\mathbb{C}^n$, then $A = aI$ for some constant $a\in\mathbb{C}$.

\noindent $(iii)$ Suppose that both $A$ and $B$ are $\tau$-quasilinear transformations satisfying
    \[
    \langle Ax,By\rangle = 0 \iff x\perp y
    \]
    for all $x,y\in\mathbb{C}^n$. Then both $A$ and $B$ are either linear or conjugate linear.
\end{proposition}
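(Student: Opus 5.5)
For part $(i)$, since $A$ and $B$ are invertible, $[Ax]=[Bx]$ means $Bx=\lambda(x)Ax$ for every nonzero $x$, with $\lambda(x)\in\mathbb C\setminus\{0\}$. First I note that an injective $\tau$-quasilinear map preserves linear independence. Indeed, if $\alpha Ax+\beta Ay=0$, then $A(\tau^{-1}(\alpha)x+\tau^{-1}(\beta)y)=0$, so $\tau^{-1}(\alpha)x+\tau^{-1}(\beta)y=0$, and $\tau^{-1}$ vanishes only at $0$. Now take $x,y$ linearly independent. Comparing
\[
B(x+y)=\lambda(x+y)(Ax+Ay) \quad\text{and}\quad B(x+y)=\lambda(x)Ax+\lambda(y)Ay,
\]
the independence of $Ax,Ay$ gives $\lambda(x)=\lambda(x+y)=\lambda(y)$. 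If $x,y$ are dependent and nonzero, I pass through a third vector $z$ independent of both, which exists since $n\ge 2$. Hence $\lambda\equiv b$ is constant. Then
\[
\delta(a)bAx=\delta(a)Bx=B(ax)=bA(ax)=b\tau(a)Ax,
\]
so $\delta=\tau$ and $B=bA$.

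For part $(ii)$, the hypothesis says $Ax\perp y$ for every $y\in\{x\}^\perp$. Thus $Ax\in(\{x\}^\perp)^\perp=[x]$, and since $A$ is invertible, $[Ax]=[x]=[Ix]$ for all $x$. Applying $(i)$ to the pair $A$ and the linear map $I$ gives $I=bA$, that is, $A=b^{-1}I$. Incidentally, $\tau$ must then be the identity.

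For part $(iii)$, I would reduce everything to a computation on a two-dimensional coordinate slice. Fix an orthonormal basis $e_1,\dots,e_n$ and put $\alpha_{ij}=\langle Ae_i,Be_j\rangle$. The hypothesis gives $\alpha_{ij}=0$ for $i\ne j$. Moreover $\alpha_{jj}\ne 0$ for every $j$: otherwise $Be_j$ would be orthogonal to all the vectors $Ae_1,\dots,Ae_n$, which span $\mathbb C^n$ because $A$ is an invertible quasilinear map, and so $Be_j=0$, contradicting invertibility. Now let $x=e_1+ae_2$ and $y=e_1+be_2$. Then $x\perp y$ if and only if $1+a\bar b=0$. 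On the other hand, by quasilinearity,
\[
\langle Ax,By\rangle=\alpha_{11}+\tau(a)\overline{\tau(b)}\,\alpha_{22}.
\]
For $a\ne0$, choosing $b=-1/\bar a$ forces $\langle Ax,By\rangle=0$. This yields
\[
\tau(a)\,\overline{\tau(\bar a)}=\alpha_{11}/\alpha_{22}=:c \quad\text{for all } a\neq 0 .
\]
Here I used that $\tau$ is multiplicative, so $\tau(-1/\bar a)=-1/\tau(\bar a)$. Setting $a=1$ gives $c=1$. Hence $\tau(\bar a)=\overline{\tau(a)}^{\,-1}\tau(a)\overline{\tau(a)}$; more directly, $\overline{\tau(\bar a)}=\tau(a)^{-1}\cdot$ evaluated at $a\bar a$. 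The cleaner route is to apply the identity to real $a=t$, which gives $|\tau(t)|^2=1$, and to $a$ replaced by $a\bar a$.

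The main obstacle is extracting from this functional identity that $\tau$ commutes with complex conjugation. Once that is known, $\tau$ maps $\mathbb R$ into $\mathbb R$. It therefore preserves squares of reals, hence preserves the order on $\mathbb R$, and so it is the identity on $\mathbb R$. Since $\tau(i)=\pm i$, it follows that $\tau$ is either the identity or conjugation, so $A$ and $B$ are both linear or both conjugate linear.

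To get the needed relation, I would first try to use the identity with $\tau(a)\overline{\tau(\bar a)}=1$ together with multiplicativity. Replacing $a$ by $a\bar a$, which is real, suggests showing that $\tau$ preserves positive reals, since $\tau(r)=\tau(\sqrt r)^2$. If the one-slice computation does not suffice, I would also vary $x=e_1+ae_2$, $y=e_1+be_3$ with $n\ge 3$, or use $\alpha_{22}/\alpha_{33}$ relations across slices to pin down the normalisation.
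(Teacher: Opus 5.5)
Your parts $(i)$ and $(ii)$ are correct and follow the paper's argument; in $(i)$ your handling of dependent vectors is slightly more careful than the paper's. The gap is in $(iii)$, at exactly the step you call the main obstacle.

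The two-dimensional slice is a good idea, but you evaluated it incorrectly. With $b=-1/\bar a$ one has $\tau(b)=-1/\tau(\bar a)$, hence $\overline{\tau(b)}=-1/\overline{\tau(\bar a)}$. The vanishing of $\alpha_{11}+\tau(a)\overline{\tau(b)}\,\alpha_{22}$ therefore gives the \emph{quotient} identity $\tau(a)/\overline{\tau(\bar a)}=\alpha_{11}/\alpha_{22}$, not the product $\tau(a)\overline{\tau(\bar a)}=c$. The product identity is false. For $\tau=\mathrm{id}$ it would say $a^2$ is constant, and your consequence $|\tau(t)|^2=1$ for real $t$ contradicts $\tau(2)=2$. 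Starting from it, you never prove that $\tau$ commutes with conjugation. Your last paragraph only lists strategies you ``would try,'' so as written $(iii)$ is not established.

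The repair is one line. From the correct identity, $a=1$ gives $\alpha_{11}/\alpha_{22}=1$, so $\tau(a)=\overline{\tau(\bar a)}$ for all $a\neq0$, i.e.\ $\tau(\bar a)=\overline{\tau(a)}$. Your closing argument then finishes the proof: $\tau(\mathbb R)\subseteq\mathbb R$, squares and hence order are preserved, so $\tau|_{\mathbb R}=\mathrm{id}$, and $\tau(i)=\pm i$. Incidentally, $\alpha_{jj}\neq0$ follows directly from the hypothesis, since $\langle e_j,e_j\rangle=1\neq0$.

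The paper reaches the same commutation relation without coordinates:
\begin{itemize}
\item it represents the linear functional $x\mapsto\tau^{-1}(\langle Ax,By\rangle)$ as $\langle x,C_By\rangle$;
\item it shows $C_B$ is $\delta$-quasilinear with $\delta(a)=\overline{\tau^{-1}(\overline{\tau(a)})}$;
\item it applies $(ii)$ to get $C_B=cI$, hence $\delta=\mathrm{id}$.
\end{itemize}
Your corrected slice computation is a more elementary alternative that bypasses the Riesz representation and part $(ii)$, and it also yields $\alpha_{11}=\alpha_{22}$ as a by-product.
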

\begin{proof} $(i)$ It is elementary that $Bx=b_xAx$ for some $b_x\in\mathbb C$. Take any two linearly independent vectors $x,y\in\mathbb C^n$,  we have
$$B(x+y)=b_{x+y}A(x+y)=b_{x+y}Ax+b_{x+y}Ay=Bx+By=b_xAx+b_yAy.$$ It easily follows that $b_x=b_y$. Thus $Bx=bAx$ for all $x\in\mathbb C^n$. In particular, for any $a\in\mathbb C$,
$\delta (a)Bx=Bax=bAax=b\tau(a)Ax=\tau(a)bAx=\tau(a)Bx$.  Then $\delta=\tau$.

$(ii)$ Note that $[Ax]=[Ix]$ for all $x\in\mathbb C^n$. By $(i)$, $A=aI$ for some constant $a\in\mathbb C$.

$(iii)$ Fixed  any  $y\in\mathbb C^n$ and put $f_y(x)=\tau^{-1}(\langle Ax,By\rangle)$ for any $x\in\mathbb C^n$. Then $f_y$ is a linear functional on $\mathbb C^n$. Thus there exists an element $C_By\in \mathbb C^n$ such that $f_y(x)=\langle x, C_By\rangle$ for all $x\in\mathbb C^n$. It is elementary that
$C_B(y_1+y_2)=C_By_1+C_By_2$ for all $y_1,y_2\in\mathbb C^n$. For any $a\in\mathbb C$,
\begin{align*}\label{formula21}
 &\ \ \ \  \langle x,C_b ay\rangle = f_{ay}(x)
 =\tau^{-1}(\langle Ax,Bay\rangle)=\tau^{-1}(\langle Ax, \tau(a)By\rangle)\\
\nonumber&=\tau^{-1}\left(\overline{\tau(a)}\langle Ax, By\rangle\right)=\tau^{-1}\left(\overline{\tau(a)}\right)\tau^{-1}(\langle Ax, By\rangle)
=\tau^{-1}\left(\overline{\tau(a)}\right)f_y(x) \\
\nonumber& =\tau^{-1}\left(\overline{\tau(a)}\right) \langle x, C_By\rangle=\langle x,\overline{\tau^{-1}\left(\overline{\tau(a)}\right)}C_By\rangle.
\end{align*}
Let $\delta(a)=\overline{\tau^{-1}\left(\overline{\tau(a)}\right)}$ for all $a\in\mathbb C$. Then $\delta $ is a ring automorphism of $\mathbb C$  and $C_Bay=\delta(a)C_By$. That is, $C_B$ is a $\delta$-quasilinear transformation  on $\mathbb C$. It is elementary that
 $C_B$ is invertible. Note that  for any $x,y\in\mathbb C^n$,
\[ \langle x, C_By\rangle=0 \iff f_y(x)=\langle Ax, By\rangle=0 \iff \langle x,y\rangle=0. \]
 By $(ii)$, we have $C_B=cI $ for some constant. In particular, $C_B$ is linear and  \(\delta(a)=\overline{\tau^{-1}\left(\overline{\tau(a)}\right)}=a\) for all $a\in \mathbb C$. Hence
 $$\tau^{-1}\left(\overline{\tau(a)}\right)=\bar{a}, \ \forall a\in\mathbb C. $$
This implies that
$$ \overline{\tau(a)}=\tau(\bar{a}), \ \forall a\in\mathbb C.$$ Thus $\tau(\mathbb R)=\mathbb R$ and $\tau(i^2)=(\tau(i))^2=\tau(-1)=-1$.
Hence $\tau(i)=i$, $\tau(a)=a$ for all $a\in\mathbb C$ or $\tau(i)=-i$,  $\tau(a)=\bar{a}$ for all $a\in\mathbb C$. Consequently, $A$ and $B$ are linear or conjugate linear. \end{proof}
\section{Order automorphisms of $\operatorname{PI}(M_n(\mathbb C))$}
   Let $\varphi$ be an order automorphism of $\operatorname{PI}(M_n(\mathbb{C}))$, that is, a bijection satisfying
    \[
    \varphi(U) \le \varphi(V) \iff U \le V
    \]
    for all $U,V\in \operatorname{PI}(M_n(\mathbb{C}))$. Let $C$ and $D$ be two  unitary   or two  conjugate unitary transformations on $\mathbb{C}^n$. We define the map $\Delta_{CD}$ by
    \[
    \Delta_{CD}(T) = CTD,\quad \forall\, T\in \operatorname{PI}(M_n(\mathbb{C})).
    \]
    We now establish several fundamental properties of such order automorphisms $\varphi$.
\begin{proposition}\label{prop31}
    Let $\varphi$ be an order automorphism of $\operatorname{PI}(M_n(\mathbb{C}))$. Then the following statements hold:

   $(i)$ $\varphi(U)$ is unitary if and only if $U$ is unitary.

         $(ii)$ $\operatorname{r}(\varphi(U)) = \operatorname{r}(U)$ for all $U\in \operatorname{PI}(M_n(\mathbb{C}))$, where $\operatorname{r}(U)$ denotes the rank of $U$.

         $(iii)$
         For any two unitary matrices or conjugate unitary transformations $C,D$ on $\mathbb{C}^n$, the map $\Delta_{CD}$ is an order automorphism of $\operatorname{PI}(M_n(\mathbb{C}))$.
         \end{proposition}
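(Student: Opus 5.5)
Statements (i) and (ii) both follow from the order structure of $\operatorname{PI}(M_n(\mathbb C))$ under $\le$, and (iii) is a direct verification.

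For (i), I will show that the unitaries are exactly the maximal elements of $(\operatorname{PI}(M_n(\mathbb C)),\le)$. An order automorphism maps maximal elements bijectively onto maximal elements, so (i) follows once this is established.

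\emph{Unitaries are maximal.} If $U$ is unitary and $U\le V$, then $I=U^*U\le V^*V\le I$, so $V^*V=I$. By Proposition \ref{prop21}(iii), $U=VU^*U=V$.

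\emph{Non-unitaries are not maximal.} Suppose $U$ is not unitary. In finite dimensions $U^*U$ and $UU^*$ have the same rank $r<n$. Choose unit vectors $x\perp U^*U\mathbb C^n$ and $y\perp UU^*\mathbb C^n$. Then $U\perp y\otimes x$. By Proposition \ref{prop21}(ii), $U< U+y\otimes x$, and $U+y\otimes x$ is a partial isometry.

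For (ii), I will use the fact that the elements below $V$ are $\{VE:E\le V^*V\}$ (Proposition \ref{prop21}(iii)). Hence the interval $\{W:W\le V\}$ is order isomorphic to the projection lattice of $V^*V\mathbb C^n$.

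The rank of $U$ can then be characterized order-theoretically in either of two ways:
\begin{itemize}
\item as the maximal length $k$ of a strictly increasing chain $U_0<U_1<\cdots<U_k=U$ starting at $U_0=0$;
\item equivalently, as the length of the interval $[0,U]$, which is $\operatorname{r}(U)$ because $[0,U]$ is isomorphic to the subspace lattice of a space of dimension $\operatorname{r}(U)$.
\end{itemize}
The zero matrix is the least element, since $0\le V$ for all $V$, so $\varphi(0)=0$. Since $\varphi$ and $\varphi^{-1}$ preserve strict inequalities, they preserve maximal chain lengths from $0$. This gives $\operatorname{r}(\varphi(U))=\operatorname{r}(U)$.

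For (iii), $\Delta_{CD}$ is a bijection with inverse $\Delta_{C^{-1}D^{-1}}$. It maps partial isometries to partial isometries, since $(CTD)^*(CTD)=D^*T^*TD$ is a projection. This uses that $C^*C=I$, and the same holds for conjugate unitaries, where the adjoint satisfies $\langle Ax,y\rangle=\overline{\langle x,A^*y\rangle}$.

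Order preservation follows from the definition $A^*A=A^*B$, $AA^*=AB^*$:
\[
(CTD)^*(CTD)=D^*T^*TD, \qquad (CTD)^*(CSD)=D^*T^*SD.
\]
These are equal if and only if $T^*T=T^*S$. The same argument applies to the second condition. The converse direction follows by applying the inverse map.

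The only mildly delicate point is the conjugate-linear case. There, $CTD$ is a genuine linear matrix, being a composition of two conjugate-linear maps with a linear one. One must also check that $(CTD)^*=D^*T^*C^*$ still holds. This can be verified directly from the inner-product identity above.
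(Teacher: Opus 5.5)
The paper gives no proof of Proposition~\ref{prop31}. It is stated and then used immediately, to normalize $\varphi(I)=I$ and to restrict to projections, so there is no argument of the authors' to compare yours against. Your proof is correct and follows the natural route:
\begin{itemize}
\item the unitaries are exactly the maximal elements of $(\operatorname{PI}(M_n(\mathbb C)),\le)$;
\item $\operatorname{r}(U)$ is the length of the interval $[0,U]$, and $\varphi(0)=0$ because $0$ is the least element;
\item $\Delta_{CD}$ is handled by direct computation with the identities $A^*A=A^*B$ and $AA^*=AB^*$. This includes the check that $CTD$ is linear and $(CTD)^*=D^*T^*C^*$ when $C,D$ are conjugate unitary.
\end{itemize}

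The one step you assert without checking is that $E\mapsto VE$ is an order isomorphism from $\{E:E\le V^*V\}$ onto $[0,V]$, not merely a bijection of sets, which is all Proposition~\ref{prop21}(iii) gives. This takes one line:
\begin{itemize}
\item $(VE)^*(VE)=E$ gives injectivity;
\item $VE_1\le VE_2$ holds iff $VE_1=VE_2E_1$, and multiplying by $V^*$ shows this holds iff $E_1=E_2E_1$.
\end{itemize}

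Alternatively, you can bypass the isomorphism. If $U_0\le U_1$ and $U_0^*U_0=U_1^*U_1$, then $U_0=U_1U_0^*U_0=U_1$. So every strict step in a chain strictly enlarges the initial projection. Conversely, a flag $0=E_0<\cdots<E_r=U^*U$ yields the chain $UE_0<\cdots<UE_r$ of length $\operatorname{r}(U)$.

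A minor notational point: you read $y\otimes x$ as the operator sending $x$ to $y$. This matches the paper's working convention ($y\otimes x\le U$ iff $Ux=y$), although the paper's literal definition is the reverse. Under the literal definition you would simply swap the roles of $x$ and $y$ in the maximality argument. Both $\ker U$ and $\ker U^*$ are nonzero there, so the orthogonality $U\perp y\otimes x$ still holds.
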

  Now we put $U=\varphi(I)$ and  define the map $\Phi =\Delta_{U^*I}\circ \varphi$. Then $\Phi$ is also an order automorphism satisfying $\Phi(I)=I$ by Proposition \ref{prop31}. Since $E\leq I$ if and only if $E$ is a projection,  $\Phi(E)$ is a projection if and only if $E$ is a projection. Thus  the restriction $\Phi|_{\mathcal P(M_n(\mathbb C))}$ is an order automorphism  of \(\mathcal P(M_n(\mathbb C))\).
  By the fundamental theorem of elementary projective geometry  \cite[p.~44]{bae}, there exist a ring isomorphism $\tau:\mathbb{C}\to\mathbb{C}$ and a $\tau$-quasilinear invertible transformation $S$ on $\mathbb{C}^n$ such that $\Phi(E)=P_{SE}$ for all $E\in \mathcal P(M_n(\mathbb C))$, where $P_M$ denotes the projection on the subspace $M$. For each $a\in\mathbb{T}$, the unit circle $\{a\in\mathbb C:|a|=1\}$, we have $\Phi(aI)=U_a$ is  also unitary. We define a new map $\Phi_a(T) = U_a^*\Phi(aT)$ for all $T\in  PI (M_n(\mathbb C))$ again. It is straightforward to verify that $\Phi_a(I)=I$.
By a similar argument, for each $a\in\mathbb{T}$, there exist a ring automorphism $\tau_a $ of $\mathbb{C}$ and a $\tau_a$-quasilinear invertible transformation  $S_a$ on $ \mathbb{C}^n$ such that $\Phi_a(E)=P_{S_aE}$ for all $E\in \mathcal P(M_n(\mathbb C))$. In particular, for every projection $E\in \mathcal P(M_n(\mathbb C))$ and every scalar $a\in\mathbb{T}$, we obtain
    \begin{equation}\label{formula31}
    \Phi(aE)=U_a\Phi_a(E)=U_aP_{S_aE}.
    \end{equation}
\begin{lemma}\label{lemma31}
Either every $S_a$ ($a\in\mathbb T$) is linear, or every $S_a$ is conjugate linear.
\end{lemma}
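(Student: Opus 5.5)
The plan is to extract from the order structure an orthogonality-preserving relation between $S_a$ and $S_1=S$. Then Proposition \ref{prop22}(iii) forces linearity or conjugate linearity, and a continuity/compatibility argument shows that the same alternative holds for all $a$.

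\emph{Key relation.} Fix $a\in\mathbb T$ and unit vectors $x,y$. The rank-one partial isometries $x\otimes x=P_{[x]}$ and $a\,y\otimes y$ lie below the common upper bound $aP$, with $P$ the projection onto $[y]$, if $x=y$. More usefully, Lemma \ref{lemma21} says that $x\otimes x$ and $y\otimes (ay)$ have a common upper bound iff $\langle x,y\rangle=\langle x,ay\rangle$, i.e. iff $\langle x,y\rangle=0$ (when $a\ne1$). Common upper bounds are preserved by $\Phi$, and ranks are preserved by Proposition \ref{prop31}(ii).

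By \eqref{formula31}, $\Phi(x\otimes x)=P_{[Sx]}$ and $\Phi(a\,y\otimes y)=U_aP_{[S_ay]}$, which is a rank-one partial isometry $u\otimes v$ with $u=S_ay/\|S_ay\|$ and $v=U_au$. Applying Lemma \ref{lemma21} to the images, a common upper bound exists iff
\[
\langle Sx,S_ay\rangle/(\|Sx\|\|S_ay\|)=\langle Sx,U_aS_ay\rangle/(\|Sx\|\|S_ay\|).
\]
This yields, for $a\neq1$,
\[
x\perp y\iff \langle Sx,(I-U_a)S_ay\rangle=0 .
\]
Here $(I-U_a)$ may be singular, so I would first show $1\notin\sigma(U_a)$. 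This holds because $aI$ and $I$ have no common nonzero lower bound: a lower bound of $I$ is a projection $E$, and $E\le aI$ forces $E=aE$, so $E=0$. Hence $\Phi(aI)=U_a$ and $I$ have no nonzero common lower bound, which forbids any eigenvector $U_az=z$, since otherwise $z\otimes z$ would lie below both.

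Set $A=S$ and $B=(I-U_a)S_a$. These are invertible and quasilinear with automorphisms $\tau=\tau_1$ and $\tau_a$ respectively. If $\tau_a=\tau$, Proposition \ref{prop22}(iii) gives that $S$ and $S_a$ are both linear or both conjugate linear. In general, I would first adapt the argument of Proposition \ref{prop22}(iii) to two different automorphisms: $f_y(x)=\tau^{-1}\langle Ax,By\rangle$ is still linear in $x$, and additive in $y$ with a twisted scalar rule. Running the same argument with Proposition \ref{prop22}(ii) gives $C_B=cI$, and then $\overline{\tau_a(b)}=\tau(\bar b)$ for all $b$. Applying the case $a=-1$ with both roles, or comparing with $S$ itself via $x\otimes x$ against $y\otimes y$ (which yields plain orthogonality, so that $S$ preserves orthogonality in both directions, so that $\langle Sx,Sy\rangle=0\iff x\perp y$), Proposition \ref{prop22}(iii) shows that $\tau$ is the identity or conjugation. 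Then $\tau_a(b)=\overline{\tau(\bar b)}=\tau(b)$, so $\tau_a=\tau$ for every $a$, which proves the lemma.

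The main obstacle is the bookkeeping in the key relation: correctly identifying $\Phi(a\,y\otimes y)$ as a rank-one partial isometry whose initial and final vectors are $S_ay$ and $U_aS_ay$, and proving invertibility of $I-U_a$. I would handle the former via Proposition \ref{prop21}(iii), since $\Phi(a\,y\otimes y)\le \Phi(aI)=U_a$, and the latter via the lower-bound argument above.
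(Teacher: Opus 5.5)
Your two-point computation is correct. Comparing $x\otimes x$ with $a\,y\otimes y$ gives $x\perp y\iff\langle Sx,(I-U_a)S_ay\rangle=0$, your lower-bound argument for $1\notin\sigma(U_a)$ is right, and the twisted version of Proposition \ref{prop22}(iii) does yield $\tau_a(c)=\overline{\tau(\bar c)}$ for all $c\in\mathbb C$ and $a\neq 1$. The gap is the last step, where you need $\tau$ to be the identity or conjugation; this is the real content of the lemma, since the fundamental theorem of projective geometry allows an arbitrary (even wild) $\tau$.

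Your second suggestion is false. By Lemma \ref{lemma21}, $x\otimes x$ and $y\otimes y$ always have a common upper bound, because the condition $\langle x,y\rangle=\langle x,y\rangle$ is vacuous ($P_{[x,y]}$ works). So comparing them carries no orthogonality information. Indeed $S$ need not preserve orthogonality: in the paper's classification $S$ is any positive invertible matrix with $I-(A+A^*)=\pm S^2$.

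Your first suggestion ("the case $a=-1$ with both roles") also fails. With only the two indices $1$ and $-1$, swapping roles reproduces the same relation $\tau_{-1}(c)=\overline{\tau(\bar c)}$. That relation is symmetric and holds for any $\tau$ once $\tau_{-1}$ is defined by it, so it cannot force $\tau$ to be trivial.

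The missing idea is a third point of $\mathbb T$. For $a\neq b$ in $\mathbb T$, the elements $a\,x\otimes x$ and $b\,y\otimes y$ have a common upper bound iff $x\perp y$. This gives $\langle (U_b^*U_a-I)S_ax,S_by\rangle=0\iff x\perp y$, and your twisted argument then gives $\tau_b(c)=\overline{\tau_a(\bar c)}$ for every such pair. Now take $1,a,b$ distinct:
\begin{itemize}
\item the pairs $(1,a)$ and $(1,b)$ give $\tau_a=\tau_b$;
\item the pair $(a,b)$ then gives $\tau_a(\bar c)=\overline{\tau_a(c)}$, so $\tau_a$ commutes with conjugation;
\item hence $\tau_a(\mathbb R)=\mathbb R$, $\tau_a|_{\mathbb R}=\mathrm{id}$ and $\tau_a(i)=\pm i$, so $\tau_a$ is the identity or conjugation;
\item finally $\tau(c)=\overline{\tau_a(\bar c)}=\tau_a(c)$.
\end{itemize}
This is essentially the paper's route. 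It fixes $b$ and observes via (\ref{formula33}) that $(U_b^*U_a-I)S_a$ and $(U_b^*U_c-I)S_c$ induce the same map on lines for any $a,c\neq b$. Proposition \ref{prop22}(i) then gives $\tau_a=\tau_c$ for all $a,c$, and only after that does the paper apply Proposition \ref{prop22}(iii) with a single common $\tau$.
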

\begin{proof}
For any unit vectors $x,y \in\mathbb C^n$ and $a,b\in\mathbb T$ with $a\ne b$, it is elementary by Lemma \ref{lemma21}
  that
\[
\sup\{ax\otimes x,by\otimes y\}\text{ exists } \iff x\perp y.
\]
Note that
\(
\Phi(a x\otimes x)=U_a\frac{S_ax}{\|S_ax\|}\otimes \frac{S_ax}{\|S_ax\|}
\) by formula (\ref{formula31}).
Then
\[
\sup\left\{U_a\frac{S_ax}{\|S_ax\|}\otimes \frac{S_ax}{\|S_ax\|},U_b\frac{S_by}{\|S_by\|}\otimes \frac{S_by}{\|S_by\|} \right\}\text{ exists} \iff x\perp y.
\]
It follows from Lemma \ref{lemma21} that
\[
\langle U_aS_ax,U_bS_by\rangle =\langle S_ax,S_by\rangle \iff x\perp y.
\]
That is,
\begin{equation}\label{formula32}
\langle (U_b^*U_a-I)S_ax,S_by\rangle =0 \iff x\perp y.
\end{equation}
Moreover, it is elementary that $\inf\{aI,bI\}$ does not exist. It follows that $\inf\{U_a,U_b\}$ does not exist and hence  $1\not\in \sigma(U_b^*U_a)$ for any $a,b\in\mathbb T$ with $a\ne b$.  Fixed a constant  $b$ and take any two  constants  $a,c \in\mathbb T\setminus\{1\}$. By formula (\ref{formula32}),
we have
\begin{equation}\label{formula33}
[(U_b^*U_a-I)S_ax]=[(U_b^*U_c-I)S_cx]=[S_b[x]^{\perp}]^{\perp}.
\end{equation}
Since $(U_b^*U_a-I)S_a$ and $(U_b^*U_c-I)S_c$ are bijective, $\tau_a=\tau_c$ and $(U_b^*U_a-I)S_a=\lambda (U_b^*U_c-I)S_c$ for some constant $\lambda $ by Proposition \ref{prop22}. Note that $b$ is also arbitrary. we have $\tau_a=\tau$ for all $a\in \mathbb T$. Thus  every $S_a$ is $\tau$ quasi-linear.

Moreover, by formula (\ref{formula32}) and Proposition \ref{prop22}, $S_a$ and $S_b$ are either both linear or both conjugate linear.   Let $b=1$. If $S$ is linear, then $S_a$ is linear for all $a\in\mathbb{T}$. If $S$ is conjugate linear, then $S_a$ is conjugate linear for all $a\in\mathbb{T}$.
 \end{proof}
  Let $U\in M_n(\mathbb C)$ be an arbitrary unitary matrix. Define the map $\Phi_U\colon \operatorname{PI}(M_n(\mathbb C))\to \operatorname{PI}(M_n(\mathbb C))$ by
    \[
    \Phi_U(T) = \Phi(U)^*\Phi(UT)
    \]
    for all $T\in \operatorname{PI}(M_n(\mathbb C))$. Then $\Phi_U$ is also an order automorphism on $\operatorname{PI}(M_n(\mathbb C))$ and satisfies $\Phi_U(I)=I$. By the same argument as before, there exists a $\tau$-quasilinear invertible transformation $S_U$ on $\mathbb{C}^n$ such that $\Phi(UE)=\Phi(U)P_{S_U E}$ for every projection $E\in \mathcal P(M_n(\mathbb C))$. In the following, we prove that $S_U$ is also  linear.
 \begin{lemma}  Assume that $S_a$ is linear for all $a\in\mathbb{T}$. Then $S_U$ is a linear invertible  transformation on $\mathbb{C}^n$ for every unitary matrix $U\in \operatorname{PI}(M_n(\mathbb{C}))$.
 \end{lemma}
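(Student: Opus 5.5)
We will relate $S_U$ to $S=S_1$ (linear by hypothesis) through the supremum criterion of Lemma \ref{lemma21}, in the same way as in the proof of Lemma \ref{lemma31}. By construction $S_U$ is $\tau$-quasilinear for some ring automorphism $\tau$ of $\mathbb C$. By Proposition \ref{prop22}$(iii)$ it will therefore suffice to find a second $\tau$-quasilinear bijection $B$, built from $S_U$ and from a linear transformation, together with an orthogonality equivalence $\langle B x,S_a y\rangle=0\iff x\perp y$. That equivalence forces the common automorphism to be the identity or conjugation, and linearity of $S_a$ then excludes conjugation.

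The key steps are as follows.

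\textbf{Step 1.} Fix unit vectors $x,y$ and $a\in\mathbb T$. We compare $Ux\otimes x\le U$ with $ay\otimes y\le aI$. By Lemma \ref{lemma21}, $\sup\{Ux\otimes x,\,ay\otimes y\}$ exists if and only if $\langle Ux,ay\rangle=\langle x,y\rangle$, that is, $\langle (U-aI)x,y\rangle=0$. We choose $a$ with $a\notin\sigma(U)$, so that $U-aI$ is invertible.

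\textbf{Step 2.} We apply $\Phi$. We have $\Phi(Ux\otimes x)=\Phi(U)\,u\otimes u$ with $u=S_Ux/\|S_Ux\|$, and $\Phi(ay\otimes y)=U_a v\otimes v$ with $v=S_ay/\|S_ay\|$. Since $\Phi$ preserves existence of suprema, Lemma \ref{lemma21} gives
\[
\langle (U_a^*\Phi(U)-I)S_Ux,\,S_ay\rangle=0 \iff \langle (U-aI)x,y\rangle=0 .
\]
Substituting $x=(U-aI)^{-1}z$, we obtain $\langle B z,S_a y\rangle=0\iff z\perp y$, where $B=(U_a^*\Phi(U)-I)S_U(U-aI)^{-1}$. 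Here $B$ is $\tau$-quasilinear, since it is a composition of linear maps with $S_U$.

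\textbf{Step 3.} We check that $B$ is invertible, or equivalently that $1\notin\sigma(U_a^*\Phi(U))$. The infimum $\inf\{U,aI\}$ exists exactly when $U$ and $a$ agree on a nonzero subspace, i.e. when $a\in\sigma(U)$. Since we chose $a\notin\sigma(U)$, this infimum does not exist. Because $\Phi$ is an order automorphism, $\inf\{\Phi(U),U_a\}$ does not exist either. For unitaries $\Phi(U)$ and $U_a$, this means they agree on no nonzero vector, so $1\notin\sigma(U_a^*\Phi(U))$. Without this injectivity, Proposition \ref{prop22} cannot be applied.

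\textbf{Step 4.} Now $S_a$ is linear and $B$ is $\tau$-quasilinear. We rerun the functional argument of Proposition \ref{prop22}$(iii)$. Set $f_y(z)=\tau^{-1}\langle Bz,S_ay\rangle$ and represent it as $f_y(z)=\langle z,C y\rangle$. Then $C$ is quasilinear with automorphism $\delta(\lambda)=\overline{\tau^{-1}(\overline{\lambda})}$, and $C$ satisfies the hypothesis of $(ii)$, so $C=cI$ is linear. Hence $\tau^{-1}(\bar\lambda)=\bar\lambda$, so $\tau$ is the identity and $S_U$ is linear. Alternatively, we can note that $(iii)$ applies verbatim once we observe that its proof only used the quasilinearity types, and that $\tau$ being a ring automorphism of $\mathbb C$ commuting with conjugation forces $\tau=\mathrm{id}$ or conjugation. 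Since $S_a$ is linear, the conclusion $\tau=\mathrm{id}$ follows. Invertibility of $S_U$ is already given by the fundamental theorem of projective geometry.

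The main obstacle is the bookkeeping of quasilinearity types in Step 4: Proposition \ref{prop22}$(iii)$ is stated for two maps with the same automorphism, while here the automorphisms of $B$ and $S_a$ a priori differ. The plan is to redo its short argument directly with mixed types, as sketched above, rather than cite it.
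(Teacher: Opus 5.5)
Your proposal is correct and is essentially the paper's proof. You specialize the upper-bound criterion of Lemma \ref{lemma21} to the pair $U$, $aI$ with $a\notin\sigma(U)$, show $U_a^*\Phi(U)-I$ is invertible by pulling a common rank-one lower bound back through $\Phi^{-1}$, and finish with Proposition \ref{prop22}. The only difference is in Step 4. There the paper avoids your mixed-type bookkeeping by using linearity of $S_a$ to write $\langle Bz,S_ay\rangle=\langle S_a^*Bz,y\rangle$. This gives $[S_a^*Bz]=[z]$, and Proposition \ref{prop22}$(i)$ then yields directly $S_a^*(U_a^*\Phi(U)-I)S_U=\lambda(\bar{a}U-I)$, the identity used later in Remark \ref{remark32}.

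One wording fix in Step 3: $0$ is a lower bound of every pair, so $\inf\{U,aI\}$ always exists. The correct statement is that $U$ and $aI$ have no \emph{nonzero} common lower bound when $a\notin\sigma(U)$. This transfers to $\Phi(U),U_a$ because $\Phi(0)=0$, or because $\Phi$ preserves rank (Proposition \ref{prop31}$(ii)$).
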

 \begin{proof} For any two unitary matrices $U,V\in \operatorname{PI}(M_n(\mathbb C))$ and unit vectors $x,y\in\mathbb C^n$, the following equivalence holds by Lemma \ref{lemma21}.
 \[\langle  \Phi(U)S_Ux, \Phi(V)S_Vy\rangle =\langle S_Ux,S_Vy\rangle \iff \langle Ux,Vy\rangle=\langle x,y\rangle.\]
 Then \begin{equation}\label{formula34}
 \langle \bigl((\Phi(V))^*\Phi(U)-I\bigr)S_Ux, S_Vy \rangle =0 \iff \langle (V^*U-I)x,y\rangle=0.\end{equation}
 In particular,
 \begin{equation}\label{formula35}
 \langle  ((U_a)^*\Phi(U)-I)S_Ux, S_ay \rangle =0 \iff \langle (\bar{a}U-I)x,y\rangle=0\end{equation}
  for any $a\in\mathbb T$. Note that if $a\not\in \sigma (U)$, then $1\not\in\sigma\left((U_a)^*\Phi(U)\right)$.  Otherwise, there exists a unit vector $y\in \mathbb C^n$ such that
 $\Phi(U)y=U_ay$. This means that  $\Phi(U)y\otimes y=U_ay\otimes y\leq \inf\{\Phi(U),U_a\}$.  Thus
 $\Phi^{-1}(U_ay\otimes y)\leq \inf\{U,aI\}$. That is, there exists a unit vector $x\in\mathbb C^n$  such that
 $ax\otimes x\leq U$. This implies that  $Ux=ax$ and  $a\in\sigma (U)$, a contradiction. Hence we may take a constant $a\in\mathbb T$ such that $\bar{a}U-I$ is invertible and so is $(U_a)^*\Phi(U)-I$. Since $S_a$ is linear, by formula (\ref{formula35}), we have
 \begin{equation*}\label{formula36}
 \langle  (S_a^*((U_a)^*\Phi(U)-I)S_Ux, y \rangle =0 \iff \langle (\bar{a}U-I)x,y\rangle=0\end{equation*} for any $x,y\in\mathbb C^n$.
 Thus $[(S_a^*((U_a)^*\Phi(U)-I)S_Ux]=[(\bar{a}U-I)x]$ for any $x\in\mathbb C^n$.  By Proposition \ref{prop22},
  \begin{equation}\label{formula36}
  S_a^*((U_a)^*\Phi(U)-I)S_U=\lambda (\bar{a}U-I)\end{equation}
   for some constant $\lambda$. In particular,  $S_U$ is linear.
 \end{proof}
 \begin{remark}\label{remark31} \upshape   If $S_a$ is conjugate linear for every $a\in\mathbb T$ by Lemma \ref{lemma31},   then  $S_U$ is also conjugate linear for every unitary matrix $U$.  In both cases,  let $S=U|S|$ be the polar decomposition of $S$, where $U$ is linear or conjugate linear and $|S|$ is linear.  Put $\Psi =\Delta_{U^*U}\circ \Phi$. Then we also have  $\Psi$ is an order automorphism of $\operatorname{PI}(M_n(\mathbb C))$ satisfying  $\psi(I)=I$ and $\Psi(E)=U^*P_{SE}U=P_{U^*SE}=P_{|S|E}$ for any projection $E$. Thus without loss of generality, we may assume that $S$ is a   positive invertible matrix in $M_n(\mathbb C)$.
 \end{remark}
 \begin{remark}\label{remark32}\upshape
 By formula (\ref{formula36}), there exists a scalar $\lambda_U$ such that
\[
S(\Phi(U)-I)S_U = \lambda_U(U-I)
\]
holds for every unitary matrix $U$. Since the equality $[S_U E] = [\lambda S_U E]$ is valid for any nonzero scalar $\lambda$, we may replace $S_U$ with $\lambda^{-1}_US_U$ if needed. After this normalization, we may assume
\begin{equation}\label{formula37}
S(\Phi(U)-I)S_U = U-I
\end{equation}
for all unitary matrices $U$. In particular, for each $a\in\mathbb{T}$, we obtain
\begin{equation}\label{formula38}
S(U_a-I)S_a = (a-1)I.
\end{equation}
 \end{remark}
\begin{lemma} \label{lemma33}
 Let $S$ be positive invertible. There exists a real constant $d$ such that for every non-identity unitary matrix of the form
\[U=\sum_{k=1}^n a_k e_k\otimes e_k,\]
where $a_k\in\sigma (U)$ and $\{e_k\}_{1\le k\le n}$ is an orthogonal basis of $\mathbb C^n$, the following two conditions hold:

(1) For all vectors $x=\sum_{k=1}^n x_k e_k\in\mathbb C^n$,
\[\begin{aligned} S_U x&=\sum_{a_k\ne 1} x_k S_{a_k} e_k + d\,S\sum_{a_k=1} x_k e_k, \\ \Phi(U)S_U x&=\sum_{a_k\ne 1} x_k U_{a_k} S_{a_k} e_k + d\,S\sum_{a_k=1} x_k e_k. \end{aligned}\]

(2) For every non-identity unitary matrix $V$,
\[S_V^*\big(\Phi(V)^*\Phi(U)-I\big)S_U = d\big(V^*U-I\big).\]
\end{lemma}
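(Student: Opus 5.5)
We work in the linear case; the conjugate linear case is identical by Remark \ref{remark31}. The plan is to pin down $S_U$ on each eigenvector $e_k$ using the known operators $S_a$, fix a normalization constant $d$, and then read off (2) from the general orthogonality relation (\ref{formula34}) via Proposition \ref{prop22}.

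\textbf{Step 1: eigenvectors with $a_k\ne 1$.} Fix $U=\sum a_k e_k\otimes e_k$ and an index $k$ with $a_k\neq1$. Since $a_ke_k\otimes e_k\le U$ and $a_ke_k\otimes e_k\le a_kI$, order preservation together with formula (\ref{formula31}) gives that the rank-one partial isometry $\Phi(a_ke_k\otimes e_k)=U_{a_k}\frac{S_{a_k}e_k}{\|S_{a_k}e_k\|}\otimes\frac{S_{a_k}e_k}{\|S_{a_k}e_k\|}$ lies below $\Phi(U)$. On the other hand, by the definition of $S_U$ the elements below $\Phi(U)$ of rank one are $\Phi(U)\frac{S_Uz}{\|S_Uz\|}\otimes \frac{S_Uz}{\|S_Uz\|}$, and $\Phi^{-1}$ of such an element lies below $U$ with initial vector $z$. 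Comparing the two descriptions yields $[S_Ue_k]=[S_{a_k}e_k]$ and $\Phi(U)S_{a_k}e_k=U_{a_k}S_{a_k}e_k$. Thus $S_Ue_k=c_kS_{a_k}e_k$ for some nonzero scalars $c_k$.

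\textbf{Step 2: eigenvectors with $a_k=1$.} For $a_k=1$ we have $e_k\otimes e_k\le U$ and $e_k\otimes e_k\le I$, so the same comparison (now with $\Phi(I)=I$ and $\Phi(E)=P_{SE}$) gives $S_Ue_k=c_kSe_k$ with $\Phi(U)Se_k=Se_k$.

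\textbf{Step 3: showing the constants agree.} Next, I would show that all $c_k$ for $a_k\ne1$ equal $1$ and all $c_k$ for $a_k=1$ equal one common constant. Apply the normalized identity (\ref{formula37}), $S(\Phi(U)-I)S_U=U-I$, to $e_k$ with $a_k\ne1$. Combined with (\ref{formula38}), this gives
\[
c_kS(U_{a_k}-I)S_{a_k}e_k=c_k(a_k-1)e_k=(a_k-1)e_k,
\]
so $c_k=1$. For the indices with $a_k=1$ both sides of (\ref{formula37}) vanish on $e_k$, so the normalization gives no information. Instead I would use (\ref{formula34}) with $V$ a suitable unitary, for instance $V=bI$ or a unitary mixing $e_k$ with some $e_j$ having $a_j\ne1$. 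By Proposition \ref{prop22}, $S_V^*(\Phi(V)^*\Phi(U)-I)S_U=\mu_{U,V}(V^*U-I)$ whenever $V^*U-I$ is invertible. Evaluating this on basis vectors, and using Steps 1--2 together with the fact that $\mu_{U,V}$ is a single scalar, forces the $c_k$ attached to eigenvalue $1$ to be one constant $d_U$.

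\textbf{Step 4: main obstacle.} The hardest part is proving that $d_U$ equals one real constant $d$ independent of $U$, and that $\mu_{U,V}=d$. Here is what I would try first. Take $V=bI$ with $b\notin\sigma(U)$, and compute $\mu_{U,bI}$ in two ways. On an eigenvector $e_k$ with $a_k\ne1$, formula (\ref{formula38}) and the relation $S_{b}^*(U_b^*U_{a}-I)S_a=\lambda(\bar b a-1)$ from the proof of Lemma \ref{lemma31} produce a value depending only on $a_k$ and $b$. On an eigenvector with $a_k=1$, the value involves $d_U$ and $S_b^*(U_b^*-I)S$, which by (\ref{formula38}) (after taking adjoints) equals $(\bar b-1)$. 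Equating the two gives $d_U$ as a quantity determined by the functions $a\mapsto S_a$ alone. A connecting argument through unitaries sharing eigenvectors should then show this quantity is independent of $U$. Realness should come from symmetry: swapping $U$ and $V$ and taking adjoints gives $\mu_{V,U}=\overline{\mu_{U,V}}$. Once $d$ is fixed, (2) follows for all non-identity $V$ by computing on the eigenbasis of $U$, using (1) for both $U$ and $V$. The case where $V^*U-I$ is not invertible is handled by continuity in the eigenvalues or by the same basis computation.
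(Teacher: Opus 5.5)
Your proposal is correct in substance once one step is written out, and it is organized differently from the paper's proof. The paper first shows $S_V^*(\Phi(V)^*\Phi(U)-I)S_U=\beta(U,V)(V^*U-I)$ for \emph{every} pair of non-identity unitaries. When $V^*U-I$ is singular this needs the result quoted as \cite[Theorem 2.3]{br} on maps with $Tx\in[Bx]$, plus a separate rank-one case. The paper then expands both sides in the eigenbases to get $\beta(U,V)=\beta(a_k,b_j)\xi_k\overline{\eta_j}$. It obtains constancy of $\beta(a,b)$ from a pair $U_0,V_0$ diagonal in a common basis, and identifies $\xi_U$ with $d$ via an auxiliary unitary $W$.

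Your route differs in three ways. First, you only pair $U$ with a scalar unitary $bI$, $b\notin\sigma(U)\cup\{1\}$, so only the invertible case of Proposition \ref{prop22}(i) is needed. Second, you get the eigenvalue-one constant from $S_b^*(U_b^*-I)S=(\bar b-1)I$, the adjoint of (\ref{formula38}), instead of from the restriction of $S_U$ to $\ker(U-I)$. Third, you derive (2) last, by expanding directly in the eigenbases of $U$ and $V$. That expansion uses (1) for both unitaries, $S_b^*(U_b^*U_a-I)S_a=d(\bar ba-1)I$, (\ref{formula38}) and its adjoint, and $d=\bar d$ (needed for the terms with $a_k\neq 1=b_j$). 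It requires no invertibility of $V^*U-I$, so it replaces both \cite{br} and the paper's final case analysis. It is the same forward computation the paper carries out later when proving the converse. Drop your alternative ``continuity in the eigenvalues'': no continuity of $\Phi$ or of $U\mapsto S_U$ is available at this point.

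The step you leave as a promise, the ``connecting argument'' of Step 4, is the heart of the lemma, so it must be written out. Your own setup gives it immediately.
\begin{itemize}
\item For distinct $a,b\in\mathbb T\setminus\{1\}$, define $\beta(a,b)$ by $S_b^*(U_b^*U_a-I)S_a=\beta(a,b)(\bar ba-1)I$. This comes from your Step 3 identity applied to $aI$ and $bI$, not from the proof of Lemma \ref{lemma31}.
\item Evaluating the single scalar $\mu_{U,bI}$ on an eigenvector with $a_k\neq1$ gives $\mu_{U,bI}=\beta(a_k,b)$. On an eigenvector with $a_k=1$ it gives $\mu_{U,bI}=d_U$.
\item Take a unitary with spectrum $\{a,a'\}$, where $a\neq a'$ lie in $\mathbb T\setminus\{1\}$, and take $b\notin\{1,a,a'\}$. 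This shows $\beta(a,b)=\beta(a',b)$, so $\beta(a,b)=g(b)$ depends only on $b$.
\item Taking adjoints gives $\beta(b,a)=\overline{\beta(a,b)}$, hence $g(b)=\overline{g(a)}$ whenever $a\neq b$. Choosing $a\notin\{1,b,b'\}$ yields $g(b)=g(b')$.
\end{itemize}
Thus $\beta\equiv d$ with $d$ real. Moreover $d_U=d$ for every non-identity $U$ with $1\in\sigma(U)$, since such a $U$ also has an eigenvalue different from $1$. With this inserted, your argument is complete and somewhat more elementary than the paper's.
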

\begin{proof}
Note that $a_ke_k\otimes e_k\leq \inf\{U,a_kI\}$ for every $1\leq k\leq n$. Then
\[\Phi(a_ke_k\otimes e_k)=U_{a_k}\frac{1}{\|S_{a_k}x\|^2}S_{a_k}e_k\otimes S_{a_k}e_k=\Phi(U)\frac{1}{\|S_{U}x\|^2}S_{U}e_k\otimes S_{U}e_k.\]
It follows that
$S_{U}e_k=\xi_k S_{a_k}e_k$ and $\Phi(U)S_Ue_k=\xi_k U_{a_k}S_{a_k}e_k$ for some $\xi_k\in\mathbb C$ and  every $1\leq k\leq n$ . Thus
\[S_Ux=\sum\limits_{k=1}^nx_kS_Ue_k=\sum\limits_{k=1}^nx_k\xi_kS_{a_k}e_k\] and
\[\Phi(U)S_Ux=\sum\limits_{k=1}^nx_k\Phi(U)S_Ue_k=\sum\limits_{k=1}^nx_k\xi_kU_{a_k}S_{a_k}e_k.\]
\begin{align*}
&\ \ \ \ S(\Phi(U)-I)S_Ux=\sum\limits_{k=1}^n x_k\xi_kS(U_{a_k}-1)S_{a_k}e_k=\sum\limits_{k=1}^nx_k\xi_k(a_k-1)e_k\\
&=(U-I)x=\sum\limits_{k=1}^nx_k(U-I)e_k=\sum\limits_{k=1}^nx_k(a_k-1)e_k.
\end{align*}
Thus $\xi_k=1$ for every $1\leq k\leq n$ with $a_k\ne 1$. On the other hand, $S_Ux=\lambda Sx$ for all $x\in [e_k: a_k=1]$. Thus
$\xi_k=\xi_j$ for all $a_k=a_j=1$.  Put $\xi_U=\xi_k$ for all $k$ with $a_k=1$.

By formula (\ref{formula36}), we have $[S_V^*((\Phi(V))^*\Phi(U)-I)S_Ux]=[(V^*U-I)x]$ for all $x\in\mathbb C^n$. Then by \cite[Theorem 2.3]{br},
$R(S_V^*((\Phi(V))^*\Phi(U)-I)S_U)= R(V^*U-I)=[e ]$  for a nonzero vector $e$ or  $S_V^*((\Phi(V))^*\Phi(U)-I)S_U=\beta(U,V)(V^*U-I)$ for some nonzero constant $\beta(U,V)$. If $V^*U-I=\alpha e\otimes e$ is of  rank 1, then  for any $x\in [e]^{\perp}$, $(S_V^*((\Phi(V))^*\Phi(U)-I)S_U)x=0$.
It follows that $S_V^*((\Phi(V))^*\Phi(U)-I)S_U=\beta(U,V)(V^*U-I)$ again. In both cases,
\begin{equation}\label{formula39}
S_V^*((\Phi(V))^*\Phi(U)-I)S_U=\beta(U,V)(V^*U-I)\end{equation}
 for any non-identity unitary matrices $U$ and $V$. In particular,
 \begin{equation}\label{formula310}
 S_b^*(U_b^*U_a-I)S_a=\beta(a,b)(\bar{b}a-1)I\end{equation} for all $a,b \in\mathbb T$ with $a\ne b$. It is elementary that
  $\beta(a,1)=\beta(1,a)=1$ for every  $a\in\mathbb T$ with $a\ne 1$ by formula (\ref{formula38}).
Let $U=\sum\limits_{k=1}^na_ke_k\otimes e_k$ and $V=\sum\limits_{j=1}^nb_jf_j\otimes f_j$.
Take any $x,y\in\mathbb C^n$ such that $x=\sum\limits_{k=1}^nx_ke_k$ and $y=\sum\limits_{j=1}^ny_jf_j$. We have
\begin{align}\label{formula311}
&\ \ \ \ \langle S_V^*(\Phi(V))^*\phi(U)S_Ux, y\rangle=\langle \phi(U)S_Ux,\Phi(V)S_Vy\rangle\\
\nonumber &=\langle \sum\limits_{k=1}^nx_k\Phi(U)S_Ue_k, \sum\limits_{j=1}^ny_j\Phi(V)S_Vf_j\rangle\\
\nonumber &=\sum\limits_{k=1}^n \sum\limits_{j=1}^n\xi_kx_k\overline{\eta_jy_j}\langle U_{a_k}S_{a_k}e_k, U_{b_j}S_{b_j}f_j  \rangle \\
\nonumber &=\sum\limits_{k=1}^n \sum\limits_{j=1}^n\xi_kx_k\overline{\eta_jy_j}\langle S_{b_j}^*U_{b_j}^*U_{a_k}S_{a_k}e_k,  f_j  \rangle \\
\nonumber &=\sum\limits_{k=1}^n \sum\limits_{j=1}^n\xi_kx_k\overline{\eta_jy_j}\langle \left(S_{b_j}^* S_{a_k}+(\beta(a_k,b_j)\left(\overline{b_j}a_k-1\right)I\right)e_k,  f_j  \rangle \\
\nonumber &=\sum\limits_{k=1}^n \sum\limits_{j=1}^n\xi_kx_k\overline{\eta_jy_j}\langle S_{b_j}^* S_{a_k} e_k,  f_j  \rangle
 +\sum\limits_{k=1}^n \sum\limits_{j=1}^n\xi_kx_k\overline{\eta_jy_j}\langle  (\beta(a_k,a_j)\left(\overline{b_j}a_k-1\right)I e_k,  f_j  \rangle \\
 \nonumber &=\langle \sum\limits_{k=1}^n \xi_kx_k S_{a_k} e_k, \sum\limits_{j=1}^n  \eta_jy_j  S_{b_j}  f_j  \rangle
 +\sum\limits_{k=1}^n \sum\limits_{j=1}^n\xi_kx_k\overline{\eta_jy_j}\langle  (\beta(a_k,b_j)\left(\overline{b_j}a_k-1\right)I e_k,  f_j  \rangle \\
\nonumber  &=\langle S_Ux,S_Vy\rangle +\sum\limits_{k=1}^n \sum\limits_{j=1}^n\xi_kx_k\overline{\eta_jy_j}  (\beta(a_k,b_j)\left(\overline{b_j}a_k-1\right) \langle e_k,  f_j  \rangle,
\end{align}
where $\xi_k=\eta_j=1$ if $a_k \ne1$ and  $b_j\ne 1$,  $\xi_k=\xi_U$ and  $\eta_j=\xi_V$ if $a_k=b_j=1$.
On the other hand,
\begin{align}\label{formula312}
&\ \ \ \ \langle S_V^*(\Phi(V))^*\Phi(U)S_Ux, y\rangle=\langle \left(S_V^*S_U+\beta(U,V)(V^*U-I)\right)x,y\rangle\\
\nonumber &=\langle S_Ux,S_Vy\rangle +\langle  \beta(U,V)(V^*U-I)x,y\rangle
\end{align}and
\begin{align}\label{formula313}
& \ \ \ \ \langle  \beta(U,V)(V^*U-I)x,y\rangle =\beta(U,V)\left(\langle Ux,Vy\rangle -\langle x,y\rangle
\right) \\
\nonumber & =\beta(U,V)\left( \langle \sum\limits_{k=1}^n x_k a_k e_k ,\sum\limits_{k=1}^ny_j b_j f_j\rangle -\langle \sum\limits_{k=1}^nx_ke_k,\sum\limits_{j=1}^ny_jf_j\rangle\right)\\
\nonumber &=\beta(U,V)\left(\sum\limits_{k=1}^n\sum\limits_{j=1}^n( \overline{b_j}a_k-1)x_k\overline{y_j}\langle e_k,f_j\rangle
 \right).
\end{align}
Thus
\begin{align}\label{formula314}
&\ \ \ \  \sum\limits_{k=1}^n \sum\limits_{j=1}^n\xi_kx_k\overline{\eta_ky_j}  \beta(a_k,b_j)\left(\overline{b_j}a_k-1\right)\langle e_k,  f_j  \rangle\\
\nonumber &= \sum\limits_{k=1}^n\sum\limits_{j=1}^n x_k\overline{y_j}\beta(U,V)( \overline{b_j}a_k-1)\langle e_k,f_j\rangle
 .
\end{align}
It follows that
\begin{equation}\label{formula315}
\beta(U,V)=\beta(a_k,b_j)\xi_k \overline{\eta_j}
\end{equation} for all pairs $(k,j)$ such that \( \left(\overline{b_j}a_k-1\right)\langle e_k,  f_j  \rangle\ne0\).

 For any $a_1,b_1,a_2,b_2\in\mathbb T\setminus\{1\}$ with $a_k\ne b_k$ for $k=1,2$,  we define two unitary operators $U_0e_k=a_ke_k$,  $V_0e_k=b_ke_k$
  for $k=1,2$ and  $U_0e_j=e_j$, $V_0e_j=ce_j $ for $3\leq j\leq n$, where $c\in\mathbb T\setminus\{1\}$. Then $\beta(U_0,V_0)=\beta(a_1,b_1)=\beta(a_2,b_2)=\beta(c,1)\xi_{U_0}=\xi_{U_0}$ by formula (\ref{formula315}).

  This means that $\beta(a,b)$ is a constant  for all $a,b\in\mathbb T\setminus\{1\}$ with $a\ne b$.  We also note that $\beta(a,b)=\overline{\beta(b,a)}$. Then $\beta(a,b)$ is a real constant.   Put $d=\beta(a,b)$.

 Now for any unitary matrices $U$ and $V$,
 since $U\ne V$, there exists at least a pair $(k,j)$ such that $\beta(U,V)=\beta(a_k,b_j)\xi_k\overline{\eta_j}\ne 0$ by formula (\ref{formula315}). If neither $a_k$ nor $b_j$ is 1 for such a pair $a_k$ and $ b_j$,  then $\beta(U,V)=\beta(a_k,b_j)=d$. Otherwise,  let   $a_k=1$ and $b_j\ne 1$.
 In this case, $\beta(U,V)=\beta(1,b_j) \xi_U=\xi_U$.    Note that there is a $t$ such that $a_t\ne1$.
 We  define a unitary matrix $W$
 such that $We_k=ce_k $, $We_t=ce_t$ and $We_m=e_m$ for and $ m\not\in \{k, t\}$,   where $c \in\mathbb T$ $a\not =a_t$. Then
  $\beta(U,W)=\beta(1,c)\xi_U=\xi_U=\beta(a_t,c)=d$. Hence $\beta(U,V)=\xi_U=d$. Similarly, $\xi_V=d$.
\end{proof}
\begin{lemma}\label{lemma34}
 Let $S$ be  positive invertible. Then there exists a matrix $A$ such that $I-(A+A^*)=dS^2$ and \begin{equation}
  U_a=(a-1)\left(dS^2-(a-1)A^*\right)^{-1}+I=(a-1)\left(I-A-aA^*\right)^{-1}+I
  \end{equation}
  for all $a\in\mathbb T$.
\end{lemma}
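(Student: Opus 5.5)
The plan is to turn the normalized relations of Remark \ref{remark32} and Lemma \ref{lemma33} into a single functional equation in the parameter $a$, and then read off $A$.

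\emph{Inputs.} The two relations, for $a,b\in\mathbb T\setminus\{1\}$ with $a\neq b$, are
\[
S(U_a-I)S_a=(a-1)I \qquad\text{(formula (\ref{formula38}))},
\]
\[
S_b^*\bigl(U_b^*U_a-I\bigr)S_a=d(\bar b a-1)I \qquad\text{(Lemma \ref{lemma33}(2) with } U=aI,\ V=bI).
\]
Since $S$ is positive, taking adjoints in the first relation gives $S_b^*(U_b-I)^*S=(\bar b-1)I$, i.e. $S_b^*(U_b-I)^*=(\bar b-1)S^{-1}$. The first relation itself gives $(U_a-I)S_a=(a-1)S^{-1}$.

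\emph{Expansion.} Next I expand
\[
U_b^*U_a-I=(U_b-I)^*(U_a-I)+(U_b-I)^*+(U_a-I).
\]
Substituting the two identities above, the second relation becomes
\[
(\bar b-1)(a-1)S^{-2}+(\bar b-1)S^{-1}S_a+(a-1)S_b^*S^{-1}=d(\bar b a-1)I .
\]
I then use $\bar b a-1=(\bar b-1)(a-1)+(\bar b-1)+(a-1)$ and divide by $(\bar b-1)(a-1)$. Setting
\[
F(a)=(a-1)^{-1}\bigl(S^{-1}S_a-dI\bigr),
\]
this yields
\[
F(a)+F(b)^*=dI-S^{-2}\qquad\text{for all } a\neq b \text{ in } \mathbb T\setminus\{1\}.
\]

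\emph{Constancy of $F$.} This is the key step. Fix $b$; then $F(a)=dI-S^{-2}-F(b)^*$ for every $a\neq b,1$. Choosing two different $b$'s, which is possible because $\mathbb T$ is infinite, shows that $F(a)$ takes a single value $F$ on all of $\mathbb T\setminus\{1\}$. Then $F+F^*=dI-S^{-2}$. Consequently
\[
S_a=S\bigl(dI+(a-1)F\bigr),
\]
and this matrix is invertible because $S_a$ is.

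\emph{Conclusion.} From formula (\ref{formula38}),
\[
U_a-I=(a-1)S^{-1}S_a^{-1}=(a-1)\bigl(dS^2+(a-1)SFS\bigr)^{-1}.
\]
I put $A^*=-SFS$. Then $A+A^*=-S(F+F^*)S=I-dS^2$, which is the required identity $I-(A+A^*)=dS^2$. Moreover
\[
dS^2-(a-1)A^*=I-A-A^*-aA^*+A^*=I-A-aA^*,
\]
which gives both displayed expressions for $U_a$. The case $a=1$ is trivial, since $U_1=\Phi(I)=I$ and the formula gives $0+I$.

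\emph{Expected obstacle.} The main point to check is that Lemma \ref{lemma33}(2) really applies to the scalar unitaries $aI$ and $bI$ with the same constant $d$. It does, because both are non-identity unitaries, and formula (\ref{formula310}) then holds with $\beta(a,b)=d$. Beyond that, the only risk is bookkeeping of adjoints, which relies on $S=S^*$. Everything else is linear algebra.
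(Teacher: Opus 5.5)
Your proof is correct and is essentially the paper's argument. The paper substitutes $S_a=(a-1)(U_a-I)^{-1}S^{-1}$ into $S_b^*(U_b^*U_a-I)S_a=d(\bar b a-1)I$ and separates variables in terms of $(U_a-I)^{-1}$; your $F$ is just $-S^{-1}A^*S^{-1}$ for the same matrix $A$, and your two-$b$ constancy argument (needed because the relation only holds for $a\neq b$) and your $a=1$ case (which tacitly uses $d\neq 0$, available since the $\beta(U,V)$ in Lemma \ref{lemma33} are nonzero) fill in details the paper leaves unstated.
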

\begin{proof}
  By assumption $S$ is positive invertible.  Since $S(U_a-1)S_a=(a-1)I$,
 we have $S_a=(a-1)(U_a-I)^{-1}S^{-1}$
   and hence, $S_a^*=(\bar{a}-1)S^{-1}(U_a^*-I)^{-1}. $  For any $a,b\in\mathbb T$ such that $a\ne b$. We know $S_b^*(U_b^*U_a-I)S_a=d(\bar{b}a-1)I$ by Lemma \ref{lemma33}. It follows that
 $$S_b^*((U_b^*-I)U_a+U_a-I)S_a=d((\bar{b}-1)a+a-1)I.$$ By Lemma \ref{lemma33} again,
  \begin{align}
  &S_b^*(U_b^*U_a-I)S_a\\
  \nonumber &= (\bar{b}-1)S^{-1}(U_b^*-I)^{-1}\left(U_b^* U_a -I\right)(a-1)(U_a-I)^{-1}S^{-1}\\
  \nonumber &=d(\bar{b}a-1)I.
 \end{align}
 Then
 \[(U_b^*-I)^{-1}\left(U_b^* U_a -I\right)(U_a-I)^{-1}=d(\bar{b}a-1)(\bar{b}-1)^{-1} (a-1)^{-1}S^2.\]
 Thus for any $a,b\in\mathbb T\setminus\{1\}$,
 \begin{align}
 & \ \ \ \ U_b^*-I)^{-1}\left(U_b^* U_a -I\right)(U_a-I)^{-1}\\
 \nonumber &= U_b^*-I)^{-1}\left((U_b^* -I)U_a+U_a -I\right)(U_a-I)^{-1}\\
 \nonumber &= U_a(U_a-I)^{-1}+(U_b^*-I)^{-1}\\
 \nonumber &=d\left( a(a-1)^{-1}+(\bar{b}-1)^{-1}\right)S^2.
  \end{align} This means that
  \begin{equation*}
  U_a(U_a-I)^{-1}- a(a-1)^{-1}dS^2=(\bar{b}-1)^{-1}dS^2-(U_b^*-I)^{-1}.
  \end{equation*}
  Therefore
  \begin{equation}\label{formula319}
  A=U_a(U_a-I)^{-1}- a(a-1)^{-1}dS^2=(\bar{b}-1)^{-1}dS^2-(U_b^*-I)^{-1}\end{equation} is a matrix independent of $a$ and $b$ with
      $A^*=(a-1)^{-1}dS^2-(U_a-I)^{-1}$.
  Thus
  \[A+A^*=U_a(U_a-I)^{-1}- a(a-1)^{-1}dS^2+(a-1)^{-1}dS^2-(U_a-I)^{-1}=I-dS^2.\] Consequently, $  (I-(A+A^*)=dS^2$ and
  \[
  U_a=(a-1)\left(dS^2-(a-1)A^*\right)^{-1}+I.\]
  \end{proof}
  Upon replacing $S$ and $S_U$ by $\sqrt{|d|}\,S$ and $\frac{1}{\sqrt{|d|}}S_U$, respectively, we now summarise the necessary conditions for a bijection defined on $\operatorname{PI}(M_n(\mathbb{C}))$ to be an order automorphism.

\begin{proposition}\label{prop32}
Let $\Phi$ be an order automorphism of $\operatorname{PI}(M_n(\mathbb{C}))$ satisfying $\Phi(I) = I$. Then there exist a unitary or conjugate unitary operator $U$ acting on $\mathbb{C}^n$, together with a matrix $A\in M_n(\mathbb{C})$ such that $I-(A+A^*)$ is either positive invertible or negative invertible, with the property
\[
\Phi = \Delta_{UU^*} \circ \Psi_A,
\]
where $\Psi_A$ denotes the associated order automorphism characterised by the following four conditions:

   $(1)$ $I-(A+A^*) = \varepsilon S^2$, where $\varepsilon\in\{1,-1\}$ and $S$ is a positive invertible matrix.

    $(2)$
     For every $a\in\mathbb{T}$,
    \[
    \Psi_A(aI) = U_a = (a-1)\bigl(I - A - aA^*\bigr)^{-1} + I.
    \]

    $(3)$ There exists a family of invertible matrices $\{S_a: a\in\mathbb{T}\}$ obeying $S_1 = S$, such that the action of $\Psi_A$ is given explicitly as follows.
    For any  unitary matrix $U = \sum_{k=1}^n a_k e_k\otimes e_k$ and any vector $x = \sum_{k=1}^n x_k e_k\in\mathbb{C}^n$, we have
        \begin{equation}\label{prop34formula1}
        S_U x = \sum_{a_k\neq 1} x_k S_{a_k} e_k + \varepsilon\,S\sum_{a_k=1} x_k e_k, \end{equation} and
    \begin{equation} \label{prop34formula2}   \Phi(U)S_U x = \sum_{a_k\neq 1} x_k U_{a_k} S_{a_k} e_k + \varepsilon\,S\sum_{a_k=1} x_k e_k.
    \end{equation}

    $(4)$  The identities
    \[
    S_U = S - A^*S^{-1}(U-I),\qquad
    S\bigl(\Phi(U)-I\bigr)S_U = U-I
    \]
    and
    \[
    S_U^*\bigl(\Phi(U)^*\Phi(V)-I\bigr) S_V = \varepsilon\bigl(U^*V-I\bigr)
    \]
    hold for all non-identity unitary matrices $U,V$.
\end{proposition}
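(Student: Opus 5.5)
The proof assembles the preceding lemmas and remarks; only the first identity in $(4)$ is new. First, by Proposition \ref{prop31} and Lemma \ref{lemma31}, all $S_a$ and all $S_U$ are simultaneously linear or simultaneously conjugate linear. Write the polar decomposition $S=W|S|$ from Remark \ref{remark31}, where $W$ is unitary or conjugate unitary. Composing with $\Delta_{W^*W}$ gives $\Phi=\Delta_{WW^*}\circ\Psi_A$, where $\Psi_A:=\Delta_{W^*W}\circ\Phi$ fixes $I$ and sends each projection $E$ to $P_{|S|E}$. So, replacing $\Phi$ by $\Psi_A$, we may assume $S>0$ is invertible. (In the conjugate linear case the same computations go through with adjoints interpreted accordingly.)

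Next apply Remark \ref{remark32} and Lemma \ref{lemma33} to get the real constant $d$, which is nonzero since $S_V^*(\Phi(V)^*\Phi(U)-I)S_U$ is nonzero whenever $U\neq V$. Rescale $S\mapsto\sqrt{|d|}\,S$ and $S_U\mapsto |d|^{-1/2}S_U$. This leaves $P_{SE}$, $P_{S_UE}$ and the normalization \eqref{formula37} unchanged, and it turns $d$ into $\varepsilon=\operatorname{sign}(d)\in\{\pm1\}$. Lemma \ref{lemma34} then gives a matrix $A$ with $I-(A+A^*)=\varepsilon S^2$; this is $(1)$, and it shows that $I-(A+A^*)$ is positive or negative invertible. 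Lemma \ref{lemma34} also gives the formula $U_a=(a-1)(I-A-aA^*)^{-1}+I$, which is $(2)$. Lemma \ref{lemma33}(1), with $d$ replaced by $\varepsilon$, gives the two formulas of $(3)$. Lemma \ref{lemma33}(2) and \eqref{formula37} give the second and third identities of $(4)$, after renaming $U,V$.

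For the remaining identity $S_U=S-A^*S^{-1}(U-I)$, I would first check it for scalars. From \eqref{formula38}, $S_a=(a-1)(U_a-I)^{-1}S^{-1}$. By \eqref{formula319}, $(U_a-I)^{-1}=(a-1)^{-1}\varepsilon S^2-A^*$. Hence
\[
S_a=\varepsilon S-(a-1)A^*S^{-1}.
\]
This matches the claimed identity up to the convention relating $S$ and $\varepsilon S$ at $a=1$. I would then pass to a general $U=\sum a_k e_k\otimes e_k$ using $(3)$. Both sides are linear in $x$ and agree on each $e_k$: for $a_k\neq1$, $S_Ue_k=S_{a_k}e_k$ and $(U-I)e_k=(a_k-1)e_k$; for $a_k=1$, both sides reduce to the $S$ term. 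The expected obstacle is this sign/normalization bookkeeping, namely reconciling $\varepsilon S$ in $(3)$ with $S$ in $(4)$. I would settle it by checking \eqref{formula37} directly with the scalar formulas for $S_a$ and $U_a$, which fixes the sign consistently.
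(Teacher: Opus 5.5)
Most of your argument is the paper's own proof. That covers the reduction via the polar decomposition of Remark \ref{remark31}, the rescaling $S\mapsto\sqrt{|d|}\,S$, $S_U\mapsto|d|^{-1/2}S_U$, and reading off $(1)$, $(2)$ from Lemma \ref{lemma34} and $(3)$ plus the last two identities of $(4)$ from Lemma \ref{lemma33} and (\ref{formula37}). Your check that $d\neq0$ is a useful addition.

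The gap is your treatment of $S_U=S-A^*S^{-1}(U-I)$. Your own computation gives $S_a=(a-1)(U_a-I)^{-1}S^{-1}=\varepsilon S-(a-1)A^*S^{-1}$ for $a\neq1$. Inserting this into $(3)$ gives $S_Ue_k=\varepsilon Se_k-(a_k-1)A^*S^{-1}e_k$ for every $k$; when $a_k=1$ the term in $(3)$ is $\varepsilon Se_k$, not $Se_k$. So what you have actually shown is
\[
S_U=\varepsilon S-A^*S^{-1}(U-I),
\]
which differs from the claimed formula by $(1-\varepsilon)S$. For $\varepsilon=-1$ this is $2S\neq0$ on every $e_k$, not a convention at $a=1$. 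So your claims that the scalar case ``matches up to the convention at $a=1$'' and that for $a_k=1$ ``both sides reduce to the $S$ term'' are false. Checking (\ref{formula37}) also cannot ``fix the sign''. $A$ and $S$ are determined by $\Phi$ through $(1)$ and $(2)$, and (\ref{formula38}) determines each $S_a$ uniquely, so (\ref{formula37}) forces the $\varepsilon$-version and rules out the unsigned one.

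In fact the identity as printed is false when $\varepsilon=-1$. Take the paper's example $A=I$. Then $I-(A+A^*)=-I$, so $\varepsilon=-1$, $S=I$, $U_a=\bar aI$ and $\Psi_I(U)=U^*$. For $U=-I$ the second identity of $(4)$ reads $-2S_U=-2I$, so $S_U=I$, whereas $S-A^*S^{-1}(U-I)=3I$.

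The statement is missing an $\varepsilon$; the correct identity is $S_U=\varepsilon S-A^*S^{-1}(U-I)$. This is exactly what the paper later obtains for the constructed $S_W$ in the lemma following Lemma \ref{lemma35}, starting from (\ref{formula324}). The paper's proof of this proposition attributes all of $(4)$ to Lemma \ref{lemma33}, which does not contain this identity. Your scalar-then-eigenbasis computation is the right way to fill that in. But it must conclude with the $\varepsilon$-version and state explicitly that the printed formula holds only when $\varepsilon=1$.
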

\begin{proof}
By formula (\ref{formula31}) and Lemma \ref{lemma31}, we have $\Phi(aE) = U_a P_{S_a E}$ for some invertible linear or conjugate-linear transformation $S_a$ and every projection $E\in\mathcal P(M_n(\mathbb{C}))$.
Let $S_1 = U|S_1|$ denote the polar decomposition of $S_1$. In view of Remark \ref{remark31}, the map $\Psi = \Delta_{U^*U} \circ \Phi$ is an order automorphism satisfying
\[
\Psi(E) = U^* P_{SE} U = P_{U^* SE} = P_{|S|E}
\]
for all projections $E$. Consequently, $\Phi = \Delta_{UU^*} \circ \Psi$. We may therefore assume without loss of generality that $S$ is positive invertible.

Substitute $S \mapsto \sqrt{|d|}\,S$ and $S_U \mapsto \frac{1}{\sqrt{|d|}}S_U$ into the statements of Lemma \ref{lemma34}; then assertions (1) and (2) are immediately valid. Analogously, items (3) and (4) follow from Lemma \ref{lemma33} under the same substitution rule for $S$ and $S_U$.
\end{proof}

We next prove that each matrix $A$ satisfying condition (1)   of Proposition \ref{prop32} uniquely determines an automorphism $\Psi_A$ of $\operatorname{PI}(M_n(\mathbb{C}))$ for which $\Psi_A$ satisfies conditions (2), (3) and (4) of Proposition \ref{prop32}.
\begin{lemma}\label{lemma35}
  Suppose $A \in M_n(\mathbb C)$ satisfies
\(
I - (A + A^*)
\)
  is an   invertible matrix. Then for any $a \in \mathbb{T}$, the matrix
\[
I - A  - aA^*
\]
is invertible, and the matrix
\[
U_a = (a - 1)\bigl(I - A  - aA^*\bigr)^{-1} + I=(a-1)\bigl(I-(A+A^*)-(A-1)A^*\bigr)^{-1}+I
\]
is unitary.
\end{lemma}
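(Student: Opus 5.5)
Write $H = I-(A+A^*)$, which is Hermitian and invertible by hypothesis. For $a=1$ there is nothing to prove: $I-A-A^* = H$ is invertible and $U_1 = I$. So fix $a\in\mathbb T$ with $a\neq 1$.

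\textbf{Invertibility.} I will show that $I-A-aA^*$ has trivial kernel. Suppose $(I-A-aA^*)x = 0$. Take the inner product with $x$:
\[
\|x\|^2 - \langle Ax,x\rangle - a\langle A^*x,x\rangle = 0 .
\]
Put $\alpha = \langle Ax,x\rangle$, so that $\langle A^*x,x\rangle = \bar\alpha$ and the relation reads $\|x\|^2 = \alpha + a\bar\alpha$. The left side is real, so $\alpha + a\bar\alpha$ is real. Conjugating gives
\[
\alpha + a\bar\alpha = \bar\alpha + \bar a\alpha, \quad\text{that is,}\quad \alpha(1-\bar a) = \bar\alpha(1-a).
\]
Since $1-\bar a = -\bar a(1-a)$ and $a\ne 1$, we get $\bar\alpha = -\bar a\alpha$, i.e.\ $a\bar\alpha = -\alpha$. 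Substituting back yields $\|x\|^2 = 0$, so $x = 0$.

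Notice that this step does not use the invertibility of $H$ at all. Conversely, at $a=1$ the hypothesis on $H$ is exactly what is needed. So invertibility holds for every $a\in\mathbb T$.

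\textbf{Unitarity.} Let $B = I-A-aA^*$. Then
\[
U_a = \bigl(B+(a-1)I\bigr)B^{-1} = \bigl(aI - A - aA^*\bigr)B^{-1}.
\]
Set $C = aI - A - aA^*$. Using $|a| = 1$ we have $\bar a C = I - \bar aA - A^* = B^*$, so $C = aB^*$. Hence
\[
U_a = aB^*B^{-1}.
\]
Since $B$ is invertible, $(B^*B^{-1})^*(B^*B^{-1}) = (B^{-1})^*BB^*B^{-1}$. This is not obviously $I$, so I need a better factorization. I will check $U_a^*U_a = I$ directly:
\[
U_a^*U_a = (B^*)^{-1}B\,\bar a\,a\,B^*B^{-1} = (B^*)^{-1}BB^*B^{-1}.
\]
This equals $I$ iff $BB^* = B^*B$, i.e.\ iff $B$ is normal.

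Now expand both sides. Writing $B = I - A - aA^*$ and $B^* = I - A^* - \bar aA$,
\[
BB^* = I - A^* - \bar aA - A + AA^* + \bar aA^2 - aA^* + a(A^*)^2 + A^*A,
\]
and symmetrically
\[
B^*B = I - A - aA^* - A^* + A^*A + a(A^*)^2 - \bar aA + \bar aA^2 + AA^*.
\]
These agree term by term. Hence $B$ is normal, $U_a^*U_a = I$, and $U_a$ is unitary. In the finite-dimensional setting $U_a^*U_a = I$ already gives $U_aU_a^* = I$.

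\textbf{Remaining points.} The second displayed expression for $U_a$ in the statement (presumably with $(a-1)A^*$ in place of the typo $(A-1)A^*$) follows from the identity
\[
I - A - aA^* = I-(A+A^*) - (a-1)A^*.
\]
I expect the main obstacle to be spotting the normality of $B$. The tempting Cayley-transform approach does not apply directly, because $B$ is not of the form Hermitian plus $i\cdot$Hermitian in a convenient way. Once $U_a$ is written as $aB^*B^{-1}$, however, the rest is routine.
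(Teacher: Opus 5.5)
Your proof is correct and follows the paper's argument. Invertibility is the same quadratic-form computation with $\langle Ax,x\rangle$, in which $a=1$ is the only possible obstruction; for unitarity the paper writes $U_a=CD^{-1}$ with $C=a(I-A^*)-A$ and $D=I-A-aA^*$, and checks $C^*C=D^*D$ using exactly the expansions you wrote. Your observation $C=aB^*$ just recasts that identity as normality of $B$, which also follows at once from $(A+aA^*)^*=\bar a(A+aA^*)$; so your remark that a Cayley-type argument does not apply is too pessimistic.
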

\begin{proof}
If $I-A-aA^*$ is not invertible for some $a\in\mathbb T$, then $(I-A-aA^*)x=0$ for a unit vector $x$. That is, $x=Ax+aA^*x$. Hence
$\langle Ax,x\rangle +a\langle A^*x,x\rangle=\langle x,x\rangle =1$. Put $t=\langle Ax,x\rangle. $
We have $t+a\bar{t}=\bar{t}+\bar{a}t=1$. Thus $a\bar{t}+t=a=1$, a contradiction. Thus $I-A-aA^*=I-(A+A^*)-A^*(a-1)$ is invertible for all $a\in\mathbb T$.

Put  $C= a(I-A^*)-A$ and $D= I-A-aA^* $.   Then $U_a=CD^{-1}$.
   Now
   \begin{align*}
    C^*C&= (a(I-A^*)-A)^*(a(I-A^*)-A)\\
   &=(I-A)(I-A^*)-a(A^*(I-A^*))-\bar{a}(I-A)A+A^*A\\
   &=I-A-A^*+AA^*-aA^*(I-A^*)-\bar{a}(I-A)A+A^*A.
   \end{align*}
   \begin{align*}
    D^*D&=(-aA^*+(I-A))^*(-aA^*+(I-A))  \\
   &= (-\bar{a}A+(I-A^*))(-aA^*+(I-A))\\
   &= AA^*-\bar{a}A(I-A)-a(I-A^*)A^*+I-A^*-A+A^*A.
   \end{align*}
   Thus $C^*C=D^*D$, which implies that $U_a$ is unitary.
\end{proof}
 \begin{lemma}   Assume that
\(
I - (A + A^*) = \varepsilon S^2,
\)
where $\varepsilon \in \{1, -1\}$ and $S$ is a positive invertible matrix. Then for every unitary matrix $W$, there exists a matrix $S_W$, together with a map $\Psi_A$ defined on the set of all unitary elements contained in $\operatorname{PI}(M_n(\mathbb{C}))$, such that the pair $(S_W, \Psi_A)$ fulfills the assertions of Proposition \ref{prop32}, with $U_a$ for any $a\in\mathbb T$ given as in Lemma \ref{lemma35}.
          \end{lemma}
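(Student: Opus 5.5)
The plan is to write down explicit formulas for $S_W$ and $\Psi_A(W)$ and check the conditions of Proposition \ref{prop32} by direct algebra. For every unitary $W$ I set
\[
S_W=\varepsilon S-A^*S^{-1}(W-I),\qquad \Psi_A(W)=I+S^{-1}(W-I)S_W^{-1}.
\]
Here $\varepsilon S$ replaces the $S$ in the displayed formula of Proposition \ref{prop32}(4). The $\varepsilon$ is needed so that $S_I=\varepsilon S$ matches (\ref{prop34formula1}) on the eigenvectors with $a_k=1$. By construction $S(\Psi_A(W)-I)S_W=W-I$, which is the second identity in (4).

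\emph{Consistency with (2).} For $W=aI$ we get
\[
S_aS=\varepsilon S^2-(a-1)A^*=I-A-aA^*,
\]
which is invertible by Lemma \ref{lemma35}. Hence
\[
\Psi_A(aI)-I=(a-1)S^{-1}S_a^{-1}=(a-1)(S_aS)^{-1}=(a-1)(I-A-aA^*)^{-1},
\]
so $\Psi_A(aI)=U_a$ as in Lemma \ref{lemma35}.

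\emph{Formulas (3).} Let $W=\sum a_ke_k\otimes e_k$. Since $W-I$ acts on the right, $S_We_k=\varepsilon Se_k-(a_k-1)A^*S^{-1}e_k=S_{a_k}e_k$. Linearity then gives (\ref{prop34formula1}). Next,
\[
\Psi_A(W)S_W=S_W+S^{-1}(W-I),
\]
so $\Psi_A(W)S_We_k=S_{a_k}e_k+(a_k-1)S^{-1}e_k=U_{a_k}S_{a_k}e_k$. The last equality is the identity $U_aS_a=S_a+(a-1)S^{-1}$ from the scalar case. When $a_k=1$ this reduces to $\varepsilon Se_k$, which gives (\ref{prop34formula2}).

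\emph{The cross identity.} Using $\Psi_A(U)S_U=S_U+S^{-1}(U-I)$, I expand
\[
(S_U+S^{-1}(U-I))^*(S_V+S^{-1}(V-I))-S_U^*S_V .
\]
It equals
\[
S_U^*S^{-1}(V-I)+(U^*-I)S^{-1}S_V+(U^*-I)S^{-2}(V-I).
\]
Next I substitute $S_U^*=\varepsilon S-(U^*-I)S^{-1}A$ and $S_V=\varepsilon S-A^*S^{-1}(V-I)$. The mixed terms then combine into
\[
(U^*-I)S^{-1}\bigl(I-A-A^*\bigr)S^{-1}(V-I)=\varepsilon(U^*-I)(V-I).
\]
The total is therefore $\varepsilon\bigl[(V-I)+(U^*-I)+(U^*-I)(V-I)\bigr]=\varepsilon(U^*V-I)$. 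This is a pure computation and does not use invertibility of $S_U$ or $S_V$.

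\emph{Main obstacle: invertibility of $S_W$ for arbitrary unitary $W$.} This is what makes $\Psi_A(W)$ well defined and unitary, and I expect it to be the hardest step. My approach is to use the expansion above with $U=V=W$ as an identity between the operators $X=S_W+S^{-1}(W-I)$ and $S_W$:
\[
X^*X-S_W^*S_W=\varepsilon(W^*W-I)=0 .
\]
If $S_Wx=0$, then $\|Xx\|^2=\|S_Wx\|^2=0$, so $S^{-1}(W-I)x=0$, i.e. $Wx=x$. Then $S_Wx=\varepsilon Sx=0$, which forces $x=0$. Hence $S_W$ is invertible.

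\emph{Unitarity of $\Psi_A(W)$.} With $S_W$ invertible, $X=\Psi_A(W)S_W$. The same identity gives $S_W^*(\Psi_A(W)^*\Psi_A(W)-I)S_W=0$, hence $\Psi_A(W)^*\Psi_A(W)=I$. So $\Psi_A$ maps unitaries to unitaries, and all of (1)–(4) hold.
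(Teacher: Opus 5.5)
Your proof is correct. It builds the same objects as the paper, but verifies them by a different route.

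\textbf{Same construction.} Your $S_W=\varepsilon S-A^*S^{-1}(W-I)$ is exactly the operator the paper arrives at. Your $\Psi_A(W)=I+S^{-1}(W-I)S_W^{-1}$ coincides with the paper's $\Psi_A(W)$. The paper defines it spectrally, by prescribing $\Psi_A(W)S_We_k=U_{a_k}S_{a_k}e_k$ on an eigenbasis of $W$ and $0$ on $R(S_W)^{\perp}$. Your remark that $\varepsilon S$ must replace $S$ in (4) also matches what the paper's proof actually derives.

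\textbf{How the paper verifies it.} It first proves the scalar identities (\ref{formula325}) and (\ref{formula331}). It then obtains the cross identity in (4) by expanding $\langle\Psi_A(W)S_Wx,\Psi_A(V)S_Vy\rangle$ in the two eigenbases and applying those identities term by term. Invertibility of $S_W$ comes afterwards, by choosing $V$ with $V^*W-I$ invertible.

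\textbf{How you verify it.} You work with closed formulas. As a result, $S(\Psi_A(W)-I)S_W=W-I$ holds by definition, and the spectral formulas (3) become a one-line check. The cross identity becomes a basis-free computation that uses $I-A-A^*=\varepsilon S^2$ exactly once.

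\textbf{Invertibility step.} Your argument also differs from the paper's. Taking $V=W$ gives $X^*X=S_W^*S_W$, hence $\ker S_W\subseteq\ker(W-I)$, and there $S_W=\varepsilon S$. This has two side benefits.
\begin{itemize}
\item Applied to $W=aI$, it independently reproves Lemma \ref{lemma35}, since $S_{aI}S=I-A-aA^*$.
\item It avoids a well-definedness gap in the paper's spectral definition. That definition prescribes $\Psi_A(W)$ through the values $S_Wx$ before $S_W$ is known to be injective.
\end{itemize}

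\textbf{What the paper's route buys.} It makes (3) true by construction, and it mirrors the necessity computation of Lemma \ref{lemma33}.

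\textbf{One thing to tidy up.} Fix the order of presentation. Establish the identity for $X=S_W+S^{-1}(W-I)$ and the invertibility of $S_W$ before you write $S_W^{-1}$ in the definition of $\Psi_A(W)$. As you note, that identity does not use invertibility, so there is no actual circularity.
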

     \begin{proof}
  Let $a\in\mathbb T\setminus\{1\}$. It follows from Lemma \ref{lemma35}  that $U_a-I$ is invertible  with inverse
\begin{equation}\label{formula322}
(U_a-I)^{-1}=\frac{1}{a-1}\varepsilon S^2-A^*=\frac{1}{a-1}\varepsilon S^2-(I-A-\varepsilon S^2)=\frac{a}{a-1}\varepsilon S^2-I+A.\end{equation}
Thus
\begin{equation}\label{formula323}
(U_a-I)^{-1}=\frac{1}{a-1}\varepsilon S^2-A^* =\frac{1}{a-1}\left(\varepsilon S-(a-1)A^*S^{-1}\right)S .\end{equation}
Put
\begin{equation}\label{formula324}
S_a=\varepsilon S-(a-1)A^*S^{-1}.\end{equation}
 Then $S_a$ is invertible and
\begin{equation}\label{formula325}
S(U_a-I)S_a=(a-1)I.
\end{equation}In particular,
\begin{equation}\label{formula326}
S_a=(a-1)(U_a-I)^{-1}S^{-1}.
\end{equation}
Moreover,
\begin{equation}\label{formula327}
U_a(U_a-I)^{-1}=(U_a-I)^{-1}+I=\frac{a}{a-1}\varepsilon S^2+A.\end{equation}
 In particular,
 \begin{equation}\label{formula328}
 A=U_a(U_a-I)^{-1}-\frac{a}{a-1}\varepsilon S^2=\frac1{\bar{b}-1}\varepsilon S^2-(U_b^*-I)^{-1},\ \ \  \forall a,b\in\mathbb T\setminus\{1\}.
 \end{equation}
It follows that
\begin{equation}\label{formula329}
U_a(U_a-I)^{-1}+(U_b^*-I)^{-1}=\left(\frac{a}{a-1}+\frac1{\bar{b}-1}\right)\varepsilon S^2, \ \ \  \forall a,b\in\mathbb T\setminus\{1\}.
\end{equation}
Thus
\begin{align}\label{formula330}
&\ \ \ \  (U_b^*-1)^{-1}\left(U_b^*U_a-I\right)(U_a-I)^{-1}\\
\nonumber & =(U_b^*-1)^{-1}\left((U_b^*-I)U_a+(U_a-I)\right)(U_a-I)^{-1}\\
\nonumber  &=U_a(U_a-I)^{-1}+(U_b^*-I)^{-1}\\
\nonumber  &=\left(\frac{a}{a-1}+\frac1{\bar{b}-1}\right)\varepsilon S^2\\
\nonumber &=\frac{(\bar{b}-1)a+(a-1)}{(\bar{b}-1)(a-1)}\varepsilon S^2\\
\nonumber &=\frac{\bar{b}a-1}{(\bar{b}-1)(a-1)}\varepsilon S^2.
\end{align}
This implies that
\begin{align}\label{formula331}
& S_b^*(U_b^*U_a-I)S_a\\
\nonumber &=\left((\bar{b}-1)S^{-1}(U_b^*-I)^{-1}\right)(U_b^*U_a-I)\left((a-1)(U_a-I)^{-1}(S)^{-1}\right)\\
\nonumber &=\varepsilon (\bar{b}a-1)I.
\end{align}
  Let $W$ be a non-identity  unitary matrix. Then $W=\sum\limits_{k=1}^n a_ke_k\otimes e_k$ for some $a_k\in\mathbb T$ and an orthogonal basis $\{e_k:1\leq k\leq n\}$.
  For any $x=\sum\limits_{k=1}^n x_ke_k\in\mathbb C^n$, we define
  \begin{equation}\label{formula332}
  S_Wx=\sum\limits_{a_k\ne 1} x_kS_{a_k}e_k+\varepsilon S\sum\limits_{a_k= 1} x_ke_k,
   \end{equation}
   \begin{equation}\label{formula333}
  \Psi_A(W)S_Wx=\sum\limits_{a_k\ne 1}x_kU_{a_k}S_{a_k}e_k+\varepsilon S\sum\limits_{a_k= 1} x_ke_k
  \end{equation}
  and   $\Psi_A(W)z=0$ for any $z\in R(S_W)^{\bot}$.

 By formula (\ref{formula324}), for all $x\in\mathbb C^n$,
\begin{align*}
S_Wx&=\sum\limits_{a_k\ne 1}x_kS_{a_k}e_k+\sum\limits_{a_k= 1}\varepsilon x_kSe_k\\
&=\sum\limits_{a_k\ne 1}x_k\left( \varepsilon S-A^*S^{-1}(a_k-1)\right)  e_k +\sum\limits_{a_k=1}x_k   \varepsilon Se_k\\
&=(\varepsilon S-A^*S^{-1}(W-I))x.
\end{align*}
Thus, $S_W=\varepsilon S-A^*S^{-1}(W-I)$.

   It is elementary that $\Psi_A(aI)=U_a$ for all $a\in\mathbb T$.  Take any two non-identity  unitary matrices $W$ and $V$ in $M_n(\mathbb C)$. Let $V=\sum\limits_{j=1}^nb_j f_j\otimes f_j$ for some $b_j\in \mathbb T(1\leq j\leq n)$ and  an orthogonal  basis $\{f_j:1\leq j\leq n\}$.
  For any   $x,y\in\mathbb C^n$ with $x=\sum\limits_{k=1}^nx_ke_k$ and $y=\sum\limits_{j=1}^ny_jf_j$,
  \begin{align*}\label{formula335}
  & \ \    \ \ \langle S_V^*(\Psi_A(V))^*\Psi_A(W)S_Wx, y\rangle \\
  \nonumber & =\langle \Psi_A(W)S_Wx, \Psi_A(V)S_Vy\rangle\\
  \nonumber &=\langle \sum\limits_{a_k\ne 1}x_kU_{a_k}S_{a_k}e_k+\varepsilon S\sum\limits_{a_k= 1} x_ke_k, \sum\limits_{b_j\ne 1}y_jU_{b_j}S_{b_j}f_j+\varepsilon S\sum\limits_{b_j= 1} y_jf_j\rangle\\
  \nonumber &=\sum\limits_{a_k\ne1,b_j\ne1} x_k\overline{y_j}\langle U_{a_k}S_{a_k}e_k,U_{b_j}S_{b_j}f_j\rangle
  +\sum\limits_{a_k=1,b_j\ne1} x_k\overline{y_j}\langle \varepsilon S e_k,U_{b_j}S_{b_j}f_j\rangle\\
  \nonumber &\ \ \ \ \ \sum\limits_{a_k\ne1,b_j=1} x_k\overline{y_j}\langle U_{a_k}S_{a_k}e_k,\varepsilon Sf_j\rangle+
  \sum\limits_{a_k=1,b_j=1} x_k\overline{y_j}\langle \varepsilon  S e_k,\varepsilon  S f_j\rangle\\
\nonumber
 &=\sum\limits_{a_k\ne1,b_j\ne1}x_k\overline{y_j}\langle S_{b_j}^*U_{b_j}^* U_{a_k}S_{a_k}e_k, f_j\rangle+\sum\limits_{a_k=1,b_j\ne1}x_k\overline{y_j}\langle \varepsilon S_{b_j}^*U_{b_j}^*  S e_k, f_j\rangle\\
\nonumber &\ \ \ \ \ +\sum\limits_{a_k\ne1,b_j=1}x_k\overline{y_j}\langle \varepsilon S U_{a_k} S_{a_k} e_k, f_j\rangle
+\sum\limits_{a_k=1,b_j=1}x_k\overline{y_j}\langle  S    e_k, Sf_j\rangle\\
\nonumber
&=\sum\limits_{a_k\ne1,b_j\ne1}x_k\overline{y_j}\langle\left( S_{b_j}^* S_{a_k}+\varepsilon (\overline{b_j}a_k-1)I\right)e_k, f_j\rangle\\
\nonumber &\ \ \ \ \ +\sum\limits_{a_k=1,b_j\ne1}x_k\overline{y_j}\langle \varepsilon \left(S_{b_j}^*   S +(\overline{b_j}-1)I\right)e_k, f_j\rangle\\
\nonumber &\ \ \ \ \ +\sum\limits_{a_k\ne1,b_j=1}x_k\overline{y_j}\langle \varepsilon\left( S   S_{a_k}+(a_k-1)I\right) e_k, f_j\rangle
+\sum\limits_{a_k=1,b_j=1}x_k\overline{y_j}\langle  S    e_k, Sf_j\rangle\\
\nonumber &= \sum\limits_{a_k\ne1,b_j\ne1}x_k\overline{y_j}\langle S_{b_j}^* S_{a_k}e_k,f_j\rangle +\sum\limits_{a_k\ne1,b_j\ne1}x_k\overline{y_j}\langle \varepsilon (\overline{b_j}a_k-1)I)e_k, f_j\rangle\\
\nonumber &\ \ \ \ \ +\sum\limits_{a_k=1,b_j\ne1}x_k\overline{y_j}\langle \varepsilon  S_{b_j}^*   S e_k,f_j\rangle
+\sum\limits_{a_k=1,b_j\ne1}x_k\overline{y_j}\langle \varepsilon(\overline{b_j}-1)Ie_k, f_j\rangle\\
\nonumber &\ \ \ \ \ +\sum\limits_{a_k\ne1,b_j=1}x_k\overline{y_j}\langle \varepsilon S   S_{a_k}e_k,f_j\rangle
+\sum\limits_{a_k\ne1,b_j=1}x_k\overline{y_j}\langle \varepsilon(a_k-1)I e_k, f_j\rangle\\
\nonumber &\ \ \ \ \ +\sum\limits_{a_k=1,b_j=1}x_k\overline{y_j}\langle  S    e_k, Sf_j\rangle\\
\nonumber &=\left(\sum\limits_{a_k\ne1,b_j\ne1}x_k\overline{y_j}\langle S_{b_j}^* S_{a_k}e_k,f_j\rangle +\sum\limits_{a_k=1,b_j\ne1}x_k\overline{y_j}\langle \varepsilon  S_{b_j}^*   S e_k,f_j\rangle \right. \\
\nonumber &\ \ \ \ \  \left.  +\sum\limits_{a_k\ne1,b_j=1}x_k\overline{y_j}\langle \varepsilon S   S_{a_k}e_k,f_j\rangle +\sum\limits_{a_k=1,b_j=1}x_k\overline{y_j}\langle  S    e_k, Sf_j\rangle\right)\\
\nonumber & +\left( \sum\limits_{a_k\ne1,b_j\ne1}x_k\overline{y_j}\langle \varepsilon (\overline{b_j}a_k-1)I)e_k, f_j\rangle+
\sum\limits_{a_k=1,b_j\ne1}x_k\overline{y_j}\langle \varepsilon(\overline{b_j}-1)Ie_k, f_j\rangle\right.\\
\nonumber &\ \ \ \ \ \left. +\sum\limits_{a_k\ne1,b_j=1}x_k\overline{y_j}\langle \varepsilon(a_k-1)I e_k, f_j\rangle\right)\\
\nonumber &=\langle\left( S_V^* S_W+\varepsilon (W^*V-1)I\right)x, y\rangle.
  \end{align*}
Thus
\begin{equation*}
S_V^*\left((\Psi_A(V))^*\Psi_A(W)-I\right)S_W=\varepsilon (V^*W-I),\end{equation*} that is, Assertion (4) in Proposition \ref{prop32} holds.
Take any $V$ such that $ V^*W-I$ is invertible. This  implies that $S_W$ is invertible by  above formula. It also follows that $(\Psi_A(W))^*\Psi_A(W)=I$ and $\Psi_A(W)$ is unitary.
\end{proof}
 We now extend $\Psi_A$ to $\operatorname{PI}(M_n(\mathbb C))$.  Note that Assertion (4)  of Proposition \ref{prop32} holds.   For any unitary matrices $W,V\in \operatorname{PI}(M_n(\mathbb C))$, $S_WE=S_VE$ if $WE=VE$.
Now for any partial isometry $R$ with initial projection $E_R=R^*R$, we have  $R\overset{*}{\leq}W$, that is,  $R=WE_R$ for some unitary matrix $W\in M_n(\mathbb C)$. If $R=WE_R=VE_R$ for two unitary matrices $W$ and $V$, then  $S_WE_R=S_VE_R$ and
\begin{equation}\label{formula336}
S_V^*\left((\Psi_A(V))^*\Psi_A(W)-I\right)S_WE_R=0 \end{equation}
 by  Assertion (4)  of Proposition \ref{prop32} again.  Consequently,
\[ (\Psi_A(V))-\Psi_A(W))P_{S_WE_R}=0. \] This means that $\Psi_A(W)P_{S_WE_R}=\Psi_A(V)P_{S_VE_R}$ is a well-defined partial isometry.
 We define
 \begin{equation}\label{formula337}
 \Psi_A(R)= \Psi_A(W)P_{S_WE_R}.
 \end{equation}
  \begin{lemma}\label{lemma37} The map $\Psi_A$ defined via formula (\ref{formula337}) is an order automorphism of $\operatorname{PI}(M_n(\mathbb{C}))$.
  \end{lemma}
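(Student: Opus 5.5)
The plan is to reduce everything to the data $(\Psi_A(W),S_W)$ attached to unitary matrices $W$. We use three facts established above: $\Psi_A(W)$ is unitary; $S_W=\varepsilon S-A^*S^{-1}(W-I)$ is invertible; and the identities of Assertion (4) of Proposition \ref{prop32} hold,
\[
S\bigl(\Psi_A(W)-I\bigr)S_W=W-I,\qquad S_V^*\bigl(\Psi_A(V)^*\Psi_A(W)-I\bigr)S_W=\varepsilon(V^*W-I).
\]
By Proposition \ref{prop21}$(iii)$ and the remark preceding formula (\ref{formula337}), every partial isometry has the form $R=WE_R$ with $W$ unitary. Moreover $R\le R'$ holds if and only if there is a \emph{common} unitary $W$ with $R=WE_R$, $R'=WE_{R'}$ and $E_R\le E_{R'}$. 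Indeed, $R'$ agrees with $R$ on the initial space of $R$, so any unitary extension of $R'$ also extends $R$.

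\textbf{Order preservation.} Suppose $R\le R'$ and choose such a common $W$. Since $S_W$ is invertible, $E_R\le E_{R'}$ gives $P_{S_WE_R}\le P_{S_WE_{R'}}$. Hence
\[
\Psi_A(R)=\Psi_A(W)P_{S_WE_R}\le \Psi_A(W)P_{S_WE_{R'}}=\Psi_A(R'),
\]
again by Proposition \ref{prop21}$(iii)$. Formula (\ref{formula336}) guarantees that the result does not depend on the choice of $W$.

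\textbf{Reverse implication.} Suppose $\Psi_A(R)\le\Psi_A(R')$ with $R=WE_R$ and $R'=VE_{R'}$. By the description of the star order, $S_WE_R\mathbb C^n\subseteq S_VE_{R'}\mathbb C^n$, and $\Psi_A(V)$ coincides with $\Psi_A(W)$ on $S_WE_R\mathbb C^n$. Thus $(\Psi_A(V)^*\Psi_A(W)-I)S_Wx=0$ for every $x\in E_R\mathbb C^n$. Assertion (4) then gives $\langle (V^*W-I)x,y\rangle=0$ for all $y$, so $Wx=Vx$ on $E_R\mathbb C^n$. Consequently $R=VE_R$. From the formula for $S_W$ we get $S_WE_R=S_VE_R$, hence $S_VE_R\mathbb C^n\subseteq S_VE_{R'}\mathbb C^n$. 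Invertibility of $S_V$ yields $E_R\le E_{R'}$, and therefore $R=VE_R\le VE_{R'}=R'$. Applying this in both directions shows that $\Psi_A(R)=\Psi_A(R')$ forces $R=R'$, so $\Psi_A$ is injective.

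\textbf{Surjectivity.} Let $Q$ be a partial isometry and pick a unitary $X$ with $Q\le X$, so $Q=XF$ for a projection $F$. If $X=\Psi_A(W)$ for some unitary $W$, then $Q=\Psi_A(WE)$ with $E=P_{S_W^{-1}F}$. So it suffices to show that $W\mapsto\Psi_A(W)$ maps the unitary group \emph{onto} itself, and I expect this to be the main obstacle.

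Writing $T=W-I$ and $Y=X-I$, the first identity becomes
\[
SY\bigl(\varepsilon S-A^*S^{-1}T\bigr)=T,\quad\text{i.e.}\quad \bigl(I+SYA^*S^{-1}\bigr)T=\varepsilon SYS .
\]
First I would show that $I+SYA^*S^{-1}$, equivalently $I+YA^*$, is invertible for every unitary $X$. For this I would imitate the kernel-vector argument of Lemma \ref{lemma35}, using that $I-(A+A^*)=\varepsilon S^2$ is definite. One then sets $W=I+(I+SYA^*S^{-1})^{-1}\varepsilon SYS$.

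It remains to check that this $W$ is unitary and that $\Psi_A(W)=X$. I would do this by showing that the inverse Möbius-type map has the same shape as $\Psi_A$ for a new matrix $A'$ with $I-(A'+A'^*)$ definite; the natural candidate satisfies $I-(A'+A'^*)=\varepsilon S^{-2}$. Unitarity then follows exactly as in the $C^*C=D^*D$ computation of Lemma \ref{lemma35}. If this direct inversion becomes too messy, a fallback is a topological argument. The map $W\mapsto\Psi_A(W)$ is continuous and injective on the compact connected manifold of unitaries, hence open by invariance of domain and therefore onto.
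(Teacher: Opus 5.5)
Your proposal is correct. It differs from the paper in two places, one of which fills a gap in the paper's own proof.

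First, the paper shows only that $\Psi_A$ preserves order and that it is surjective, then concludes ``order isomorphism.'' Its order-preservation step uses a common unitary $W$ with $R\le T\le W$, exactly as you do. It never shows that $\Psi_A(R)\le\Psi_A(T)$ implies $R\le T$, nor that $\Psi_A$ is injective. Your reverse-implication paragraph supplies this missing half. You use Assertion (4) to force $Wx=Vx$ on $E_R\mathbb C^n$, and then invertibility of $S_V$ to get $E_R\le E_{R'}$.

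Second, surjectivity onto the unitaries is handled differently. The paper sets $C=I+\varepsilon S(U-I)S+S(U-I)A^*S^{-1}$ and $D=I+S(U-I)A^*S^{-1}$. It checks $CC^*=DD^*$ directly and takes a unitary $W$ with $C=DW$, so it never needs $D$ to be invertible. Your $W=I+D^{-1}\varepsilon SYS=D^{-1}C$ is the same matrix, but you first prove $D$ invertible. That kernel argument works: $(I+YA^*)v=0$ leads to $\varepsilon\|Sv\|^2=0$. You then recognize $X\mapsto W$ as another map of the same type, and your candidate does exist. Take $S'=S^{-1}$ and $A'=-\varepsilon S^{-1}AS^{-1}$. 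Then $I-(A'+A'^*)=\varepsilon S'^2$ and $W=\Psi_{A'}(X)$. Unitarity of $W$ follows from the lemma immediately preceding Lemma \ref{lemma37}, applied to $A'$. The identity really needed here is the matrix analogue $CC^*=DD^*$, not the scalar computation of Lemma \ref{lemma35}.

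The paper's route is a single self-contained computation. Yours gives the extra structural fact that $\Psi_A^{-1}=\Psi_{A'}$ on unitaries, with $A''=A$, so the family $\{\Psi_A\}$ is closed under inversion. Your invariance-of-domain fallback is also valid: $W\mapsto I+S^{-1}(W-I)S_W^{-1}$ is continuous and injective on the compact connected unitary group. It does not, however, give a formula for the preimage.
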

  \begin{proof}
   $\Psi_A$ is a map on $\operatorname{PI}(M_n(\mathbb C))$. In particular, for any rank 1 partial isometry $y\otimes x$, $\Psi_A(y\otimes x)=\Psi_A(W)\frac{S_Wx}{\|S_Wx\|}\otimes\frac{ S_Wx}{\|S_Wx\|}$ for all unitary matrix $W$ with $Wx=y$.
 Thus
 \begin{equation}\label{formula338}
 \Psi_A(R)= \Psi_A(W)P_{S_WE_R}=\sup\left\{\Psi_A(W)\frac{S_Wx}{\|S_Wx\|}\otimes\frac{ S_Wx}{\|S_Wx\|}:x\in E_R(\mathbb C^n)\right\}.
   \end{equation}
   For any two partial isometries $R$ and $T$ with $R\leq T$, we have $R\leq  T\leq W$ for a unitary matrix $W$.
   Thus $\Psi_A(R)\leq \Psi_A(T)$ by formula (\ref{formula338}).

 We next prove that $\Psi_A$ is  surjective. For any unitary matrix $U\in M_n(\mathbb C)$, put $C=I+\varepsilon S(U-I)S+S(U-I)A^*S^{-1}$ and $D=I+S(U-I)A^*S^{-1}$.   We claim that  $CC^*=DD^*$.

  Note that $C=\varepsilon S(U-I)S+D$. It is sufficient to prove that
$$S(U-I)S^2(U^*-I)S+ \varepsilon S(U-I)SD^*+\varepsilon DS(U^*-I)S=0.$$
Now \begin{align*}
&\ \ \ \ S(U-I)S^2(U^*-I)S+ \varepsilon S(U-I)SD^*+\varepsilon DS(U^*-I)S\\
&=S(U-I)S^2(U^*-I)S+\varepsilon  S(U-I)S+\varepsilon S(U-I)A(U^*-I)S\\
&\ \ \ \ \   +\varepsilon S(U^*-I)S+\varepsilon S(U-I)A^*(U^*-I)S\\
&=\varepsilon S(U-I)(I-(A+A^*))(U^*-I)S+\varepsilon S(U-I)S+\varepsilon S(U-I)A(U^*-I)S\\
&\ \ \ \ \   +\varepsilon S(U^*-I)S+ \varepsilon S(U-I)A^*(U^*-I)S\\
&=\varepsilon \big(S(U-I)(U^*-I)S+S(U-I)S+S(U^*-I)S\big)\\
&=\varepsilon \big( S(2-U-U^*)S+S(U+U^*-2)S\big)\\
&=0.
\end{align*}
Thus there exists a unitary matrix $W$ such that $C=DW$. That is,
$$S(U-I)S=(I+S(U-I)A^*S^{-1})(W-I).   $$ Consequently,
$$S(U-I)S_W=S(U-I)(S-A^*S^{-1}(W-I))=W-I.$$
By  Assertion (4) of Proposition \ref{prop32}, $S(\Psi_A(W)-I)S_W=W-I=S(U-I)S_W$.
It follows that $\Psi_A(W)=U$. Hence $\Psi_A$ is surjective on the set of unitary matrices.
If $V\leq U$ is a partial isometry, then $V=UV^*V=UE_V$. Let $F$ be the projection  on $S_W^{-1}(E_V)$. Then $S_WF=E_V=V^*V$ and $\Psi_{A}(WF)=\Psi_A(W)P_{S_WF}=UE_V=V$. Thus $\Psi_A$ is an order isomorphism.
\end{proof}
 We now summarize our results into the following theorem.
\begin{theorem}
Let $\varphi$ be  a bijection  on $\operatorname{PI}(M_n(\mathbb C))$. Then $\varphi$ is an order isomorphism if and only if there exist two unitary or  two conjugate unitary transformations $C$ and $D$ on $\mathbb C^n$ and a matrix $A\in M_n(\mathbb C)$ satisfying   $I- (A+A^*)$ is  positive or negative  invertible such that
$\varphi=\Delta_{CD}\circ \Psi_A$.
\end{theorem}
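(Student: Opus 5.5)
The theorem is essentially an assembly of the results already established, so I will prove the two implications separately by putting the earlier propositions and lemmas together.

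\emph{Sufficiency.} Suppose $\varphi=\Delta_{CD}\circ\Psi_A$, where $C,D$ are both unitary or both conjugate unitary and $I-(A+A^*)$ is positive or negative invertible. Write $I-(A+A^*)=\varepsilon S^2$ with $\varepsilon\in\{1,-1\}$ and $S$ the positive square root of $\varepsilon(I-(A+A^*))$. Lemma \ref{lemma35} and the lemma following it then produce the unitaries $U_a$, the matrices $S_W$ and the map $\Psi_A$ on unitaries, extended to all partial isometries by formula (\ref{formula337}). Lemma \ref{lemma37} shows that $\Psi_A$ is an order automorphism, and Proposition \ref{prop31}(iii) shows that $\Delta_{CD}$ is one. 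A composition of order automorphisms is an order automorphism, so $\varphi$ is one.

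\emph{Necessity.} Let $\varphi$ be an order automorphism and put $U_0=\varphi(I)$, which is unitary by Proposition \ref{prop31}(i). Then $\Phi=\Delta_{U_0^*I}\circ\varphi$ is an order automorphism with $\Phi(I)=I$. Proposition \ref{prop32} gives a unitary or conjugate unitary $V$ and a matrix $A$ with $I-(A+A^*)=\varepsilon S^2$ such that $\Phi=\Delta_{VV^*}\circ\Psi$, where $\Psi$ satisfies conditions (1)--(4).

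The main step is to show that $\Psi$ coincides with the map $\Psi_A$ built from $A$ in Lemma \ref{lemma37}.
\begin{itemize}
\item \emph{Scalars.} Condition (2) gives $\Psi(aI)=U_a=\Psi_A(aI)$.
\item \emph{Unitaries.} For a non-identity unitary $W$, condition (4) gives
\[
S(\Psi(W)-I)S_W=W-I, \qquad S_W=S-A^*S^{-1}(W-I).
\]
The matrix $S_W$ is determined by $A$ alone, with the same normalization (sign $\varepsilon$) as in the construction of $\Psi_A$. This normalization should be checked against formula (\ref{formula324}), since the paper writes $\varepsilon S$ in one place and $S$ in another. Choose a unitary $V'$ with $V'^*W-I$ invertible. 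Then the third identity of (4) forces $S_W$ to be invertible, so
\[
\Psi(W)=I+S^{-1}(W-I)S_W^{-1}=\Psi_A(W).
\]
\item \emph{General partial isometries.} Write $R=WE_R$ with $W$ unitary. Since $R\le W$, we have $\Psi(R)\le\Psi(W)$, so $\Psi(R)=\Psi(W)F$ for a projection $F$ of rank equal to $\operatorname{r}(R)$ (Proposition \ref{prop21}(iii) and Proposition \ref{prop31}(ii)).
\item \emph{Identifying $F$.} Apply $\Psi_W(T)=\Psi(W)^*\Psi(WT)$ and use $\Psi(WE)=\Psi(W)P_{S_WE}$, as in the construction preceding Lemma \ref{lemma33}. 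This gives $F=P_{S_WE_R}$, which is exactly formula (\ref{formula337}). Hence $\Psi=\Psi_A$.
\end{itemize}

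Finally,
\[
\varphi=\Delta_{U_0I}\circ\Phi=\Delta_{U_0I}\circ\Delta_{VV^*}\circ\Psi_A=\Delta_{U_0V,\,V^*}\circ\Psi_A.
\]
Here $C=U_0V$ and $D=V^*$ are both unitary when $V$ is linear, and both conjugate unitary when $V$ is conjugate linear. In the conjugate-linear case one must check that the composition still has the form $\Delta_{CD}$ with $C,D$ of the same type. This holds because $U_0$ is linear, so $U_0V$ has the same linearity as $V$.

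The main obstacle I expect is the bookkeeping: the scalar normalizations of $S_U$ (the factor $d$ and the sign $\varepsilon$) in Proposition \ref{prop32} must match the construction in Lemma \ref{lemma37}, so that the two maps agree exactly rather than up to a scalar.
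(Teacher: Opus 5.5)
Your proposal is correct and follows the paper's route. You reduce to $\varphi(I)=I$, pass to a positive $S$ via the polar decomposition (packaged in Proposition \ref{prop32}), identify the resulting map with $\Psi_A$, and get sufficiency from Lemma \ref{lemma37} and Proposition \ref{prop31}(iii); the difference is that you actually carry out the identification $\Psi=\Psi_A$, on unitaries via $S(\Psi(W)-I)S_W=W-I$ and on $WE_R$ via $\Psi(W)P_{S_WE_R}$, whereas the paper simply attributes this step to Lemma \ref{lemma37}.

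The normalization you flag comes out the way you need. Condition (2) together with $S(U_a-I)S_a=(a-1)I$ forces $S_a=\varepsilon S-(a-1)A^*S^{-1}$, so condition (3) gives $S_W=\varepsilon S-A^*S^{-1}(W-I)$. This matches (\ref{formula324}) and (\ref{formula332}), so the missing $\varepsilon$ in condition (4) of Proposition \ref{prop32} is just a typo.
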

\begin{proof} The sufficiency follows from  Proposition \ref{prop32}.

Let $\varphi$ be an order automorphism of $\operatorname{PI}(M_n(\mathbb C))$. Put $U=\varphi(I)$ and $\Phi=\Delta_{U^*I}\circ \varphi$. Then $\Phi$ is an order automorphism with $\Phi(I)=I$.   By Lemma \ref{lemma31}, we obtain a family of invertible transformations $\{\tilde{S}_a\colon a\in\mathbb T\}$ on $\mathbb C^n$ such that either every $\tilde{S}_a$ ($a\in\mathbb T$) is linear, or every $\tilde{S}_a$ is conjugate linear.  Let $\tilde{S}_1=W_1|\tilde{S}_1|$ be the decomposition of $\tilde{S}_1$. Then $W$ is linear or conjugate and $|\tilde{S}_1|$ is positive invertible.  Put $\Psi=\Delta_{W_1^*W_1}\circ \Phi$. Then $\Psi(E)=\Delta_{W^*W}\circ \Phi(E)=W_1^*P_{\tilde{S}_1E}W_1=P_{|\tilde{S}_1|E}$ for every projection $E\in \mathcal P(M_n(\mathbb C))$.  Thus, the family of invertible transformations $\{S_a\colon a\in\mathbb T\}$ associated with $\Psi$ consists of linear maps, and $S=|\tilde{S}_1|$ is a positive invertible matrix.
By Lemma \ref{lemma37},  we have $\Psi=\Psi_A$, where $A$ satisfies $(1)$ and $ (2)$ of Proposition
\ref{prop32}. Put $C=UW_1$ and $D=W_1^*$. We have $\varphi=\Delta_{CD}\circ \Psi_A$.
\end{proof}
   \begin{example} Take $A=0$ Then $\Psi_0$ is the identity. If $A=I$, then $\Psi_I(V)=V^*$, $\forall V\in \operatorname{PI}(M_n(\mathbb C))$.
   \end{example}
Finally, we conclude that an automorphism $\varphi$ of $\operatorname{PI}(M_n(\mathbb C))$ preserves orthogonality if and only if the corresponding matrix $A$ is either $0$ or $I$. Consequently, up to unitary or conjugate unitary transformations, there exist exactly two order-orthogonal  automorphisms.

\end{document}